\numberwithin{equation}{section}
\begin{document}

\author{Hai Jin}
\address{School of Mathematical Sciences,\ Shanghai Jiao Tong University,  \ Shanghai 200240, \ China}
\curraddr{}
\email{jinhaifyh@sjtu.edu.cn}
\thanks{Keywords: graded automorphism groups, skew polynomial ring, quantum affine spaces}
\thanks{This work was supported by National Natural Science Foundation of China (Grant No. 12131015), and Natural
Science Foundation of Shanghai (Grant No. 23ZR1435100)}
\subjclass[2020]{Primary 16W20,16W50; Secondary 16S36,16S38}

\newtheorem{lemdef}{Lemma-Definition}[section]
\newtheorem{theorem}{Theorem}[section]
\newtheorem{corollary}[theorem]{Corollary}
\newtheorem{lemma}[theorem]{Lemma}
\newtheorem{proposition}[theorem]{Proposition}
\newtheorem{examples}[theorem]{Examples}
\newtheorem{fact}[theorem]{Fact}

\theoremstyle{definition}
\newtheorem{definition}[theorem]{Definition}
\newtheorem{remark}[theorem]{Remark}

\def\Id{{\rm Id}}
\def\rad{{\rm rad}}

\title{Graded automorphisms of quantum affine spaces}

\begin{abstract}
This paper computes the graded automorphism group of quantum affine spaces. Specifically, we determine that this group is isomorphic to a semi-direct product of a blocked diagonal matrix group and a permutation group.
\end{abstract}

\maketitle
\section{\bf Introduction}



Let ${\bf q}=(q_{ij})\in {\mathcal M}_n(k)$ satisfies $q_{ij}q_{ji}=1$ and $q_{ii}=1$ for $1\le i\le j\le n$. Recall that a multi-parameter quantum affine space \cite{BG2002}, or simply quantum affine space, is a $k$-algebra 
$$
   \mathcal{O}_{\bf q}(k^{n})= k\langle x_1,\cdots, x_n\rangle/\langle x_jx_i-q_{ij}x_ix_j, 1\le i,j\le n \rangle.
$$
When $n=2$, it is called quantum plane. This is an $\mathbb{N}$-graded algebra $ \mathcal{O}_{\bf q}(k^{n})=\oplus_{i=0}^{\infty}A_i$ with $A_1 = \langle x_1,\cdots, x_n \rangle$. Denote ${\rm Aut}(\mathcal{O}_{\bf q}(k^{n}))$ and ${\rm Aut_{gr}}( \mathcal{O}_{\bf q}(k^{n}))$ to be the automorphism group and $\mathbb{N}$-graded automorphism group of $\mathcal{O}_{\bf q}(k^{n})$.           

The automorphism groups of $\mathcal{O}_{\bf q}(k^n)$ were firstly described by Alev and Chamarie in \cite{AC1996}. Using derivations, they computed ${\rm Aut}(\mathcal{O}_{\bf q}(k^{n}))$ when $n$ and ${\bf q}$ are under some restrictions. For example, when $q_{ij}=q\in k^*$ for all $i<j$ and $q$ is not a root of unit, they showed
\[
{\rm Aut}(\mathcal{O}_{\bf q}(k^n))\cong
\left\{
    \begin{aligned}
        &(k^*)^3\rtimes k, &&n=3 \\
        &(k^*)^n, &&n\ne 3.
    \end{aligned}
\right.
\]
When $q=-1$($\ne 1$), Kirkman, Kuzmanovich and Zhang proved ${\rm  Aut}_{\rm gr}(\mathcal{O}_{\bf q}(k^n)) \cong (k^*)^n \rtimes S_n$ \cite{KKZ2014}.
Ceken, Palmieri, Wang and Zhang introduced a discriminant method in \cite{CPWZ2015}, which is very effective to compute the automorphism group of some non-commutative algebras and has many applications \cite{CPWZ2016,CYZ2018}. 
In particular, they showed when ${\bf q}$ satisfies some conditions, the automorphism group coincides with the graded automorphism group, i.e., ${\rm Aut}(\mathcal{O}_{\bf q}(k^n))={\rm Aut}_{\rm gr}(\mathcal{O}_{\bf q}(k^n))$ \cite[Theorem 3.4]{CPWZ2016}. Furthermore, they provided ${\rm Aut}_{\rm gr}(\mathcal{O}_{\bf q}(k^n))$ when $q_{ij}\ne 1$ for all $i\ne j$ \cite[Proposition 3.9(1)]{CPWZ2016}: 
\begin{equation*}
    {\rm Aut}_{\rm gr}(\mathcal{O}_{\bf q}(k^n))\cong (k^*)^n\rtimes \mathcal{P}_{\bf q},
\end{equation*}
where $\mathcal{P}_{\bf q}$ is a permutation group (see Definition \ref{defcp}). 
This constraint is relatively mild; however, it does exclude some interesting quantum affine spaces, e.g., the initial quantum cluster(that is, the quantum affine space generated by an initial cluster) attached to some uniparameter symmetric quantum nilpotent algebras, see examples in \cite{GY2017}, \cite{GY2020} and \cite{GY2021}. Therefore, our research seeks to compute ${\rm Aut}_{\rm gr}(\mathcal{O}_{\bf q}(k^n))$ beyond this limitation.

In this paper, we present the detailed structure of ${\rm Aut}_{\rm gr}(\mathcal{O}_{\bf q}(k^n))$. Specifically, we extend \cite[Proposition 3.9(1)]{CPWZ2016} by removing their assumptions on the matrix ${\bf q}$, thereby deriving the graded automorphism group for arbitrary quantum affine spaces. 
To achieve this, we firstly give a sufficient and necessary condition for a matrix ${\bf m}\in {\rm GL}_n(k)$ to induce a graded automorphism of $\mathcal{O}_{\bf q}(k^{n})$ (see Theorem \ref{theoremquantumplane2}), which strengthens Lemma 1.4.1 in \cite{AC1996}.
Then, we give the main result Theorem \ref{theoremmain}, which describes the graded automorphism group of $\mathcal{O}_{\bf q}(k^{n})$, i.e.,
\[
    {\rm  Aut}_{\rm gr}(\mathcal{O}_{\bf q}(k^{n}))\cong (\prod_{1\le i\le m}{\rm GL}_{|B_i|}(k)) \rtimes \mathcal{P}_{\bf q}/\mathcal{I}_{\bf q} 
\]
This theorem involves a partition $\{B_1,\cdots, B_m\}$ of the index set $\{1,\cdots, n \}$, which is introduced in \cite{KKZ2010} (see Definitions \ref{defblock}), and two permutation groups $\mathcal{P}_{\bf q}$ and $\mathcal{I}_{\bf q}$ (see Definition \ref{defcp} and \ref{defip}). 

Throughout this paper, let $k$ be the base field, $k^*= k\setminus \{0\}$, algebras and maps are over $k$. 

\section{\bf Preliminaries}


\subsection{\bf Graded automorphisms of quantum affine spaces}

If $f\in {\rm Aut_{gr}}(\mathcal{O}_{\bf q}(k^{n}))$, then there is ${\bf m}\ = (m_{ij})\in{\rm GL}_n(k)$ such that $f(x_i)=\sum\limits_{1\le j\le n}m_{ij}x_j, \ 1\le i\le n$. Consider the following representation 
$$
\rho: {\rm Aut_{gr}}(\mathcal{O}_{\bf q}(k^{n}))\rightarrow {\rm GL}_n(k), \ f\mapsto {\bf m}^T.
$$ 
This is a faithful representation, thus ${\rm Aut_{gr}}(\mathcal{O}_{\bf q}(k^{n}))$ is isomorphic to a subgroup of ${\rm GL}_n(k)$, which will denote by $M(\mathcal{O}_{\bf q}(k^{n}))$.
Conversely, given a matrix ${\bf m}=(m_{ij})\in {\rm GL}_n(k)$, one can define a $k$-linear map $f:A_1 \rightarrow A_1$ by $f(x_i)=\sum\limits_{1\le j\le n}m_{ij}x_j, \ 1\le i\le n$. This map induces a graded algebra automorphism of $A$ in the natural way if and only if ${\bf m}^T\in M(\mathcal{O}_{\bf q}(k^{n}))$. 

Alev and Chamarie gave a description of $M(\mathcal{O}_{\bf q}(k^{n}))$ when the characteristic of $k$ is zero \cite{AC1996}, which can be generalized to fields of arbitrary characteristic as follows:
\begin{lemma}\cite[Lemma 1.4.1]{AC1996}\label{lemmaquantumplane1}
    Suppose ${\bf m}=(m_{ij})\in {\rm GL}_n(k)$. Then ${\bf m}^T\in M(\mathcal{O}_{\bf q}(k^{n}))$ if and only if the following conditions are satisfied.
    \begin{align}
        \label{eqlemmaquantumplane11}
        &(q_{ij}-q_{st})m_{is}m_{jt}=(1-q_{ij}q_{st})m_{js}m_{it}; \\
        \label{eqlemmaquantumplane12}&(q_{ij}-1)m_{is}m_{js}=0, \ \forall \ 1\le i< j\le n,\ 1\le s< t\le n.
    \end{align}
\end{lemma}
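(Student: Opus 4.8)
The plan is to turn the membership condition ${\bf m}^T\in M(\mathcal{O}_{\bf q}(k^{n}))$ into the single requirement that the degree-preserving assignment $f(x_i)=\sum_{1\le s\le n}m_{is}x_s$ respects the defining relations, and then to read off \eqref{eqlemmaquantumplane11} and \eqref{eqlemmaquantumplane12} by expanding those relations in the PBW basis of $\mathcal{O}_{\bf q}(k^{n})$.

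First I would record the reduction. By the universal property of the quotient algebra, the linear map $f$ on $A_1$ extends to a graded algebra endomorphism of $\mathcal{O}_{\bf q}(k^{n})$ if and only if the images $f(x_i)$ satisfy the same relations as the $x_i$, namely $f(x_j)f(x_i)=q_{ij}f(x_i)f(x_j)$ for all $1\le i<j\le n$. The relation for a pair $(j,i)$ is a nonzero scalar multiple of the one for $(i,j)$, so it suffices to test $i<j$. Since ${\bf m}\in{\rm GL}_n(k)$, any such endomorphism is automatically bijective: its image contains $A_1$ and hence, as $\mathcal{O}_{\bf q}(k^{n})$ is generated in degree one, equals the whole algebra; being a surjective degree-preserving map between the finite-dimensional homogeneous components, it is an automorphism. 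Thus ${\bf m}^T\in M(\mathcal{O}_{\bf q}(k^{n}))$ is equivalent to the $\binom{n}{2}$ relations above, and the task is to translate each of them.

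The core of the argument is then a direct computation. Fixing $i<j$ and writing $E_{ij}:=f(x_j)f(x_i)-q_{ij}f(x_i)f(x_j)$, expansion gives $E_{ij}=\sum_{s,t}(m_{js}m_{it}-q_{ij}m_{is}m_{jt})\,x_sx_t$. I would then put each $x_sx_t$ into normal form, leaving $x_sx_t$ untouched for $s\le t$ and using $x_tx_s=q_{st}x_sx_t$ (with $q_{st}q_{ts}=1$) for $s<t$, and collect the coefficient of each basis monomial. The diagonal monomials $x_s^2$ produce the coefficient $(1-q_{ij})m_{is}m_{js}$, whose vanishing is exactly \eqref{eqlemmaquantumplane12}. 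For $s<t$, combining the contributions of $x_sx_t$ and of $x_tx_s$ gives the coefficient $(1-q_{ij}q_{st})m_{js}m_{it}-(q_{ij}-q_{st})m_{is}m_{jt}$, whose vanishing is exactly \eqref{eqlemmaquantumplane11}. As $\{x_sx_t:s\le t\}$ is a $k$-basis of $A_2$, requiring $E_{ij}=0$ is equivalent to the vanishing of all these coefficients, which proves both directions simultaneously.

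The only delicate point is the normal-ordering bookkeeping in the last step: one must merge the $(s,t)$ and $(t,s)$ terms correctly after rewriting $x_tx_s$ through the commutation rule, and confirm that no basis monomial is counted twice. Apart from this, the argument is entirely formal. I would also note in passing that \eqref{eqlemmaquantumplane12} arises solely from the diagonal and \eqref{eqlemmaquantumplane11} solely from the strictly off-diagonal monomials, so the two families of equations are independent and together account for all the constraints.
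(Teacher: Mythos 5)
Your proposal is correct, and its computational core coincides with the paper's: both expand $f(x_j)f(x_i)-q_{ij}f(x_i)f(x_j)$ in degree two and compare coefficients of $x_s^2$ and $x_sx_t$ to arrive at \eqref{eqlemmaquantumplane11} and \eqref{eqlemmaquantumplane12}. The difference is scope. The paper proves \emph{only the necessity}: it assumes $f$ is an automorphism, restricts the relation $f(x_jx_i-q_{ij}x_ix_j)=0$ to the two variables $x_s,x_t$, and reads off the coefficients of $x_s^2$ and $x_sx_t$, deferring sufficiency to the cited \cite{AC1996}. You prove both directions at once, via two supplementary steps the paper does not write down: (i) the reduction of membership in $M(\mathcal{O}_{\bf q}(k^{n}))$ to the $\binom{n}{2}$ relations $f(x_j)f(x_i)=q_{ij}f(x_i)f(x_j)$ with $i<j$, using the universal property of the quotient and the fact that the $(j,i)$-relation is a nonzero scalar multiple of the $(i,j)$-one since $q_{ij}q_{ji}=1$; and (ii) the observation that a graded endomorphism with invertible degree-one component is automatically an automorphism (its image contains $A_1$, hence all of the algebra by degree-one generation, and a surjective graded endomorphism is bijective on each finite-dimensional $A_d$). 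Both steps are standard and correct, and your normal-ordering bookkeeping — merging the $(s,t)$ and $(t,s)$ contributions via $x_tx_s=q_{st}x_sx_t$ into the coefficient $(1-q_{ij}q_{st})m_{js}m_{it}-(q_{ij}-q_{st})m_{is}m_{jt}$ — checks out. What your route buys is a self-contained proof of the stated equivalence, where the paper records only one implication.

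One small point that your fuller expansion actually exposes: the diagonal monomials give $(q_{ij}-1)m_{is}m_{js}=0$ for \emph{every} $s\in\{1,\dots,n\}$, whereas the quantifier printed in \eqref{eqlemmaquantumplane12}, namely $1\le s<t\le n$, literally omits the case $s=n$ (for $n=2$, the coefficient of $x_2^2$ is never inspected). That missing equation is genuinely needed for sufficiency: on a quantum plane with $q_{12}\ne 1$, the map $f(x_1)=x_1+x_2$, $f(x_2)=x_2$ satisfies the literally quantified \eqref{eqlemmaquantumplane11} and \eqref{eqlemmaquantumplane12} but does not preserve $x_2x_1=q_{12}x_1x_2$. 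So your claim that the diagonal coefficients vanish ``exactly'' as in \eqref{eqlemmaquantumplane12} should be read in the all-indices sense of Remark \ref{remarkquantumplane1}, which is the form the paper uses downstream; your derived system, with $s$ running over all of $\{1,\dots,n\}$, is the correct one and is in fact slightly stronger than the printed statement.
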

\begin{proof}
    We only prove the necessity. Let $f$ be the graded automorphism induced by ${\bf m}^T$, i.e., $f$ satisfies $f(x_i)=\sum\limits_{1\le j\le n}m_{ij}x_j, \ 1\le i\le n$. Then $f(x_jx_i-q_{ij}x_ix_j) = 0 $ for $1\le i,j\le n$. Notice $f(x_ix_j)=f(x_i)f(x_j)$, thus for $1\le i< j\le n$ and $1\le s< t\le n$ one gets 
    \begin{align*}
        &(m_{js}x_s+m_{jt}x_t)(m_{is}x_s+m_{it}x_t) = q_{ij}(m_{is}x_s+m_{it}x_t)(m_{js}x_s+m_{jt}x_t).
    \end{align*}    
    By considering the coefficients of $x_s^2$ and $x_sx_t$ on both sides, this is equivalent to the Equations (\ref{eqlemmaquantumplane11}) and (\ref{eqlemmaquantumplane12}).
\end{proof}

\begin{remark}\label{remarkquantumplane1}
    It is easy to see that Equations $(\ref{eqlemmaquantumplane11})$ and $(\ref{eqlemmaquantumplane12})$ also hold when $i=j$. Actually, by interchanging the indices, one can observe that $(\ref{eqlemmaquantumplane11})$ and $(\ref{eqlemmaquantumplane12})$ is equivalent to the following equations:
    \begin{align*}
        &(q_{ij}-q_{st})m_{is}m_{jt}=(1-q_{ij}q_{st})m_{js}m_{it}, \\ 
        &(q_{ij}-1)m_{is}m_{js}=0, \ \forall \ 1\le i, j, s, t\le n.
    \end{align*}
\end{remark}


\subsection{\bf An improvement of Lemma \ref{lemmaquantumplane1}}

The following theorem gives a more specific description of $M(\mathcal{O}_{\bf q}(k^{n}))$ than Lemma \ref{lemmaquantumplane1}.
\begin{theorem}\label{theoremquantumplane2}
    Suppose ${\bf m}=(m_{ij})\in {\rm GL}_n(k)$. Then ${\bf m}^T\in M(\mathcal{O}_{\bf q}(k^{n}))$ if and only if $(q_{ij}-q_{st})m_{is}m_{jt}=0, \  \forall\ 1\le i,j,s,t\le n.$
\end{theorem}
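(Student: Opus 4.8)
The plan is to combine Lemma \ref{lemmaquantumplane1} with Remark \ref{remarkquantumplane1} and reduce the statement to an equivalence between the single family of equations $(q_{ij}-q_{st})m_{is}m_{jt}=0$ and the pair (\ref{eqlemmaquantumplane11})--(\ref{eqlemmaquantumplane12}), which hold for all indices by the Remark. One implication is a short specialization that does not use invertibility: putting $t=s$ yields exactly (\ref{eqlemmaquantumplane12}) since $q_{ss}=1$; and in (\ref{eqlemmaquantumplane11}) the left-hand side vanishes by hypothesis, while applying the hypothesis to the quadruple $(j,i,s,t)$ together with $q_{ji}=q_{ij}^{-1}$ shows the right-hand side $(1-q_{ij}q_{st})m_{js}m_{it}$ vanishes as well. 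Hence the real content is the reverse implication: assuming (\ref{eqlemmaquantumplane11})--(\ref{eqlemmaquantumplane12}) and ${\bf m}\in{\rm GL}_n(k)$, derive $(q_{ij}-q_{st})m_{is}m_{jt}=0$.

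For the reverse implication I would read (\ref{eqlemmaquantumplane12}) as a support condition: writing $S=\{a:m_{as}\ne0\}$ for the support of column $s$, equation (\ref{eqlemmaquantumplane12}) says every such $S$ is a \emph{commuting clique}, i.e.\ $q_{ab}=1$ for all $a,b\in S$. The target equation is trivial when $m_{is}m_{jt}=0$, and when $s=t$ it is exactly (\ref{eqlemmaquantumplane12}); so the work is to produce the \emph{dual} statement for rows, which is most cleanly phrased as: if $q_{st}\ne1$ then the supports of columns $s$ and $t$ are disjoint.

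This disjointness claim is the main obstacle, and it is where invertibility enters. Assuming some index $i$ lies in $S\cap T$ (with $T$ the support of column $t$) and $q_{st}\ne1$, I would argue in three steps. First, both $S$ and $T$ are commuting cliques by (\ref{eqlemmaquantumplane12}). Second, $S=T$: if $b\in T\setminus S$ then $q_{ib}=1$ (both in $T$), so feeding rows $i,b$ and columns $s,t$ into (\ref{eqlemmaquantumplane11}) and using $m_{bs}=0$ collapses it to $(1-q_{st})m_{is}m_{bt}=0$, forcing $m_{bt}=0$, a contradiction; thus $T\subseteq S$, and the symmetric argument (swapping $s,t$ and using $q_{ts}=q_{st}^{-1}\ne1$) gives $S\subseteq T$, whence $S=T$. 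Third, for any $a\in S=T$ the relation $q_{ia}=1$ turns (\ref{eqlemmaquantumplane11}) into $(1-q_{st})(m_{is}m_{at}-m_{as}m_{it})=0$, so $m_{at}=(m_{it}/m_{is})m_{as}$; since both columns vanish off $S=T$, column $t$ is a scalar multiple of column $s$, contradicting ${\bf m}\in{\rm GL}_n(k)$. Thus $S\cap T=\emptyset$, which establishes the row clique: whenever $m_{is}\ne0$ and $m_{it}\ne0$ one has $q_{st}=1$.

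Finally I would assemble the target for all $i,j,s,t$. If $m_{is}m_{jt}=0$ it is trivial; if $s=t$ it is (\ref{eqlemmaquantumplane12}); if $i=j$ and $m_{is}m_{it}\ne0$ the row clique gives $q_{st}=1=q_{ii}$. In the remaining case $i\ne j$, $s\ne t$, $m_{is}m_{jt}\ne0$: if $m_{js}m_{it}=0$ then the right-hand side of (\ref{eqlemmaquantumplane11}) vanishes and $q_{ij}=q_{st}$ is read off directly; while if $m_{js}m_{it}\ne0$ then all four entries are nonzero, so the column clique (\ref{eqlemmaquantumplane12}) gives $q_{ij}=1$ and the row clique gives $q_{st}=1$, whence $q_{ij}=q_{st}$. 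The degenerate sub-cases with coinciding indices are immediate since $q_{aa}=1$. I expect the proportional-columns step — proving $S=T$ and then proportionality — to be the delicate part, as it is the only place where the hypothesis ${\bf m}\in{\rm GL}_n(k)$ is genuinely used.
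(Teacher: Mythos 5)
Your proposal is correct, and both directions are fully justified: the sufficiency specialization ($t=s$ recovers (\ref{eqlemmaquantumplane12}); the quadruple $(j,i,s,t)$ plus $q_{ji}=q_{ij}^{-1}$ kills the right side of (\ref{eqlemmaquantumplane11})) matches what the paper leaves implicit, and your necessity argument is sound. For necessity, however, your organization differs from the paper's. The paper argues by direct contradiction from a single violating quadruple: it shows all four entries $m_{is},m_{it},m_{js},m_{jt}$ are nonzero, forces $q_{ij}=1$ and $q_{st}\ne 1$ via (\ref{eqlemmaquantumplane12}), concludes ${\rm Det}({\bf m}_{\{i,j\},\{s,t\}})=0$, then invokes invertibility to pick one auxiliary row $h$ with ${\rm Det}({\bf m}_{\{h,i\},\{s,t\}})\ne 0$, derives $q_{hi}=1$ from the column-$s$ and column-$t$ instances of (\ref{eqlemmaquantumplane12}), and lets (\ref{eqlemmaquantumplane11}) collapse to $(1-q_{st})\,{\rm Det}({\bf m}_{\{h,i\},\{s,t\}})=0$, a contradiction. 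You instead globalize: you prove the structural lemma that columns $s,t$ with $q_{st}\ne 1$ have disjoint supports, by showing the supports coincide and the columns are proportional, and then assemble the theorem by cases. The two arguments run on the same engine --- (\ref{eqlemmaquantumplane12}) makes column supports commuting cliques, (\ref{eqlemmaquantumplane11}) with $q=1$ degenerates to $(1-q_{st})$ times a $2\times 2$ minor, and invertibility supplies the contradiction --- and indeed the paper's ``there exists $h$ with a nonvanishing minor'' is exactly the contrapositive of your full-column proportionality. What each buys: the paper's version is shorter and stays local; yours isolates a reusable support-disjointness fact that anticipates the block decomposition of Section 3 (compare Lemma \ref{lemskel}(1), which is the same kind of statement derived \emph{after} the theorem), at the cost of a longer case analysis. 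One small redundancy in your write-up: the $S\subseteq T$ half of your second step is subsumed by the third, since for $a\in S$ the relation $m_{is}m_{at}=m_{as}m_{it}$ already forces $m_{at}\ne 0$; only $T\subseteq S$ is needed to conclude both columns vanish off $S$.
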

\begin{proof}
    The sufficiency is provided by Lemma \ref{lemmaquantumplane1}. For necessity, by Remark \ref{remarkquantumplane1} one has 
    \begin{align}
        \label{eqlemmaquantumplane21}&(q_{ij}-q_{st})m_{is}m_{jt}=(1-q_{ij}q_{st})m_{js}m_{it}, \\
        \label{eqlemmaquantumplane22}&(q_{ij}-1)m_{is}m_{js}=0, \ \forall \ 1\le i, j\le n,\ 1\le s, t\le n. 
    \end{align}
    If the statement is not true, then there are some $i,j,s,t\in \{1,\cdots,n\}$ such that 
    \begin{equation}\label{eqlemmaquantumplane3}
        (q_{ij}-q_{st})m_{is}m_{jt}=(1-q_{ij}q_{st})m_{js}m_{it}\ne 0,
    \end{equation}
    thus $m_{is}m_{it}m_{js}m_{jt}\ne 0$. Moreover, one gets $s\ne t$ since by (\ref{eqlemmaquantumplane22}) one has $(q_{ij}-q_{st})m_{is}m_{jt}=(q_{ij}-1)m_{is}m_{jt}=0$. 
    
    Consider the following $2\times 2$ matrix:
    \begin{equation*}
        {\bf m}_{\{i,j\},\{s,t\}}=\begin{pmatrix}
            m_{is}&m_{it}\\
            m_{js}&m_{jt}
        \end{pmatrix}.
    \end{equation*}
    If $i= j$, then it is trivial that ${\rm Det}({\bf m}_{\{i,i\},\{s,t\}})=0$. If $i\ne j$, by (\ref{eqlemmaquantumplane22}) and $m_{is}m_{js}\ne 0$, one gets $q_{ij}=1$. Thus (\ref{eqlemmaquantumplane3}) becomes 
    \begin{equation*}
        (1-q_{st})m_{is}m_{jt}=(1-q_{st})m_{js}m_{it}\ne 0,
    \end{equation*}
    and hence $1-q_{st}\ne 0$ and ${\rm Det}({\bf m}_{\{i,j\},\{s,t\}})=0$. Therefore, in both cases one can conclude that ${\rm Det}({\bf m}_{\{i,j\},\{s,t\}})=0$. Since ${\bf m}$ is invertible, there exists $h\in \{1,\cdots,n\}-\{i\}$ such that ${\rm Det}({\bf m}_{\{h,i\},\{s,t\}})\ne 0$. 

    Now consider matrix ${\bf m}_{\{h,i\},\{s,t\}}$:
    \begin{equation*}
        {\bf m}_{\{h,i\},\{s,t\}}=
        \begin{pmatrix}
            m_{hs}&m_{ht}\\
            m_{is}&m_{it}
        \end{pmatrix}.
    \end{equation*}
    By Remark \ref{remarkquantumplane1} one has 
    \begin{align}
        \label{eqlemmaquantumplane31}&(q_{hi}-q_{st})m_{hs}m_{it}=(1-q_{hi}q_{st})m_{is}m_{ht}, \\
        \label{eqlemmaquantumplane32}&(q_{hi}-1)m_{hs}m_{is}=0, \\
        \label{eqlemmaquantumplane33}&(q_{hi}-1)m_{ht}m_{it}=0.  
    \end{align}
    Note that ${\rm Det}({\bf m}_{\{h,i\},\{s,t\}})\ne 0$, at least one of $\{m_{hs}, m_{ht} \}$ is nonzero. Since $m_{is}m_{it}\ne 0$, by (\ref{eqlemmaquantumplane32}) and (\ref{eqlemmaquantumplane33}) one gets $q_{hi}=1$. Therefore, (\ref{eqlemmaquantumplane31}) becomes
    \begin{equation*}
        (1-q_{st})m_{hs}m_{it}=(1-q_{st})m_{is}m_{ht}.
    \end{equation*}
    However, $1-q_{st}\ne 0$ indicates ${\rm Det}({\bf m}_{\{h,i\},\{s,t\}})=m_{hs}m_{it}-m_{is}m_{ht}= 0$, a contradiction. This completes the proof.
\end{proof}

From this theorem one can see that $M(\mathcal{O}_{\bf q}(k^{n}))$ is closed under matrix transpose.
\begin{corollary}\label{corollaryquantumplane}
    Suppose ${\bf m}=(m_{ij})\in {\rm GL}_n(k)$. Then the following statements are equivalent.\\
    ${\rm (1)}$ ${\bf m}^T\in M(\mathcal{O}_{\bf q}(k^{n}))$;\\
    ${\rm (2)}$ ${\bf m}\in M(\mathcal{O}_{\bf q}(k^{n}))$;\\
    ${\rm (3)}$ $(q_{ij}-q_{st})m_{is}m_{jt}=0, \  \forall\ 1\le i,j,s,t\le n.$
\end{corollary}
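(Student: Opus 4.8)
The plan is to use Theorem \ref{theoremquantumplane2} as the hinge and to exploit the antisymmetry of the factor $q_{ij}-q_{st}$ under interchanging the pair $(i,j)$ with the pair $(s,t)$. The equivalence of (1) and (3) is literally the statement of Theorem \ref{theoremquantumplane2}, so no further argument is needed there. The only real work is to connect (2) with (3), i.e. to show that condition (3) is insensitive to replacing ${\bf m}$ by ${\bf m}^T$.

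To handle (2), I would apply Theorem \ref{theoremquantumplane2} not to ${\bf m}$ but to its transpose. Writing ${\bf n}={\bf m}^T$ with entries $n_{ij}=m_{ji}$, the theorem asserts that ${\bf n}^T={\bf m}$ lies in $M(\mathcal{O}_{\bf q}(k^{n}))$ if and only if $(q_{ij}-q_{st})n_{is}n_{jt}=0$ for all $i,j,s,t$, that is, if and only if
\[
    (q_{ij}-q_{st})m_{si}m_{tj}=0, \qquad \forall\ 1\le i,j,s,t\le n.
\]

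It remains to observe that this last system is identical to condition (3). I would relabel the (universally quantified) indices by the substitution $(i,j,s,t)\mapsto(s,t,i,j)$; under this relabeling the product $m_{si}m_{tj}$ becomes $m_{is}m_{jt}$, while $q_{ij}-q_{st}$ becomes $q_{st}-q_{ij}=-(q_{ij}-q_{st})$. Since the equations are required to hold for every quadruple of indices, the overall sign is immaterial, and the two systems coincide. Hence (2) $\Leftrightarrow$ (3), which together with Theorem \ref{theoremquantumplane2} establishes all three equivalences.

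The argument is essentially bookkeeping, and I do not anticipate any genuine obstacle. The one point demanding care is to keep the index substitution consistent, so that the antisymmetry of $q_{ij}-q_{st}$ is matched against the correct relabeling of the entries of ${\bf m}$ and not accidentally conflated with it; once the substitution $(i,j,s,t)\mapsto(s,t,i,j)$ is applied uniformly, the sign cancellation is immediate.
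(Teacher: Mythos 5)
Your proposal is correct and matches the paper's (implicit) argument exactly: the paper derives this corollary from Theorem \ref{theoremquantumplane2} by observing that condition (3) is preserved under transposing ${\bf m}$, since the relabeling $(i,j,s,t)\mapsto(s,t,i,j)$ sends $m_{si}m_{tj}$ to $m_{is}m_{jt}$ while only flipping the sign of $q_{ij}-q_{st}$. Your careful spelling-out of that substitution is precisely the bookkeeping the paper leaves to the reader.
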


\section{\bf Compatible permutations and block decomposition}

\subsection{\bf Skeleton permutations}
Let $S_n$ be the symmetric group. The following lemma is by linear algebra, which can also be seen as a corollary of \cite[Lemma 7.1]{Ga2013}.
\begin{lemma}\label{skeleton}
    Suppose ${\bf m}=(m_{ij})\in {\rm GL}_n(k)$. Then there is $\pi\in S_n$ such that $m_{\pi(i)i}\ne 0$ for $1\le i\le n$. We will call this $\pi$ a skeleton permutation of ${\bf m}$, and denote the set of skeleton permutations of ${\bf m}$ by ${\rm Skel}({\bf m})$.
\end{lemma}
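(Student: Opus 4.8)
The plan is to extract the permutation directly from the Leibniz expansion of the determinant. Since ${\bf m}=(m_{ij})\in {\rm GL}_n(k)$, we have ${\rm Det}({\bf m})\ne 0$, and the defining formula
\[
{\rm Det}({\bf m})=\sum_{\sigma\in S_n}{\rm sgn}(\sigma)\prod_{i=1}^{n}m_{\sigma(i)i}
\]
holds over any field (indeed any commutative ring), so no care about the characteristic of $k$ is needed.

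The key step is then immediate. If every summand on the right vanished, the whole sum would be $0$, contradicting ${\rm Det}({\bf m})\ne 0$. Hence there is some $\pi\in S_n$ with ${\rm sgn}(\pi)\prod_{i=1}^{n}m_{\pi(i)i}\ne 0$, and in particular $\prod_{i=1}^{n}m_{\pi(i)i}\ne 0$. Because $k$ is a field (hence has no zero divisors), a product is nonzero only if each factor is, so $m_{\pi(i)i}\ne 0$ for all $1\le i\le n$, which is exactly the assertion. Note also that as $i$ ranges over the columns $1,\dots,n$ and $\pi$ is a bijection, the indices $\pi(i)$ run over all rows exactly once, so $(m_{\pi(i)i})_i$ is a genuine generalized diagonal.

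I expect no genuine obstacle here; the only ingredients are the determinant formula and the absence of zero divisors in $k$. For completeness I note a purely combinatorial alternative: viewing the nonzero pattern of ${\bf m}$ as a bipartite graph between rows and columns, a permutation $\pi$ as sought is precisely a perfect matching, and by the Frobenius--K\"onig theorem its absence would force ${\bf m}$ to contain an all-zero $p\times q$ block with $p+q>n$, which is incompatible with invertibility. Since the determinant argument is shorter and self-contained, I would present that one and relegate the matching viewpoint to a remark at most.
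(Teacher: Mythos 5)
Your proof is correct and is essentially the paper's own argument: expand ${\rm Det}({\bf m})$ via the Leibniz formula, note that the nonzero determinant forces some term ${\rm sgn}(\pi)\prod_i m_{\pi(i)i}$ to be nonzero, and conclude each factor is nonzero since $k$ has no zero divisors. The Frobenius--K\"onig matching remark is a nice aside but, as you say, not needed.
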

\begin{proof}
    Since ${\bf m}$ is invertible, the determinant of ${\bf m}$ is nonzero. One gets 
    \[
        {\rm Det}({\bf m}) = \sum_{\pi\in S_n}{\rm sgn}(\pi)\prod_{i=1}^{n}m_{\pi(i)i}\ne 0.
    \]
    Therefore, there is $\pi\in S_n$ such that $m_{\pi(i)i}\ne 0$ for $1\le i\le n$. 
\end{proof}
Denote the matrix representation of permutation $\pi$ by ${\bf r}_{\pi}$, i.e., ${\bf r}_\pi=(r_{ij})$ satisfies
\[
    r_{ij}=
    \left\{
        \begin{array}{l}
            1, \ \ \ i=\pi(j);\\
            0, \ \ \ i\ne \pi(j),
        \end{array} 
    \right.  
\]
then ${\rm Skel}({\bf r}_{\pi})=\{\pi\}$.


\subsection{\bf Compatible permutations}
To characterize ${\rm Aut_{gr}}(\mathcal{O}_{\bf q}(k^{n}))$, we need the definition of compatible permutations, which is closely related to the isomorphism problems of quantum affine spaces and their quotient algebras, see \cite{Ga2013}, \cite{BZ2017} and \cite{JZ2023}. 
\begin{definition}\label{defcp}
    A permutation $\pi\in S_n$ is {\it compatible} with $\mathcal{O}_{\bf q}(k^{n})$, or simply compatible, if $q_{\pi(i)\pi(j)}=q_{ij}$ for $1\le i,j\le n$.
    Denote the set of compatible permutations of $\mathcal{O}_{\bf q}(k^{n})$ by ${\mathcal P}_{\bf q}$, i.e.,
    \[
    \mathcal{P}_{\bf q}:=\{\pi\in S_n\ |\ q_{\pi(i)\pi(j)}=q_{ij}, \ \forall\ 1\le i,j\le n \}.
    \]
    Then $\mathcal{P}_{\bf q}$ is a subgroup of $S_n$.
\end{definition}

\begin{examples}
    Suppose $\mathcal{O}_{{\bf q}_1}(k^3)$ and $\mathcal{O}_{{\bf q}_2}(k^3)$ are two quantum affine spaces with  
    ${\bf q}_1 =
    \left(
        \begin{smallmatrix} 
            1 &-1 &a\\ 
            -1 &1&a\\
            \frac{1}{a}& \frac{1}{a} &1 
        \end{smallmatrix}
    \right) $,  
    ${\bf q}_2 = 
    \left(
        \begin{smallmatrix} 
            1           &a          & \frac{1}{a}\\ 
            \frac{1}{a} &1          &a\\
            a           &\frac{1}{a} &1 
        \end{smallmatrix}
    \right)$, $\pm 1\ne a\in k^*$.
    Then $\mathcal{P}_{\bf q_1}=\{(1), (1,2)\}$ and $\mathcal{P}_{\bf q_2}=\{(1), (1,2,3), (1,3,2)\}$.
\end{examples}
The following lemma demonstrates the connections between $\mathcal{P}_{\bf q}$ and ${\rm Aut}_{\rm gr}(\mathcal{O}_{\bf q}(k^{n}))$.
\begin{lemma}\label{lemcon} Suppose $\pi\in S_n$ and ${\bf m}\in {\rm GL}_n(k)$.\\
   ${\rm (1)}$ If $\pi\in\mathcal{P}_{\bf q}$, then ${\bf r}_{\pi}\in M(\mathcal{O}_{\bf q}(k^{n}))$.\\
   ${\rm (2)}$ If ${\bf m}\in M(\mathcal{O}_{\bf q}(k^{n}))$, then ${\rm Skel}({\bf m})\subset \mathcal{P}_{\bf q}$.
\end{lemma}
\begin{proof} 
    (1) For  $1\le i,j,s,t\le n $ one has 
    \[
        (q_{ij}-q_{st})r_{is}r_{jt}=(q_{ij}-q_{st})r_{is}r_{jt}\delta_{i,\pi(s)}\delta_{j,\pi(t)}, 
    \]
    which is nonzero only when $i=\pi(s)$ and $j=\pi(t)$. While in this case one gets
    \[
        (q_{ij}-q_{st})r_{is}r_{jt} = q_{\pi(s)\pi(t)}-q_{st}=0, \ 
    \]
    thus by Corollary \ref{corollaryquantumplane}, ${\bf r}_{\pi}\in M(\mathcal{O}_{\bf q}(k^{n}))$.\\
    (2) Suppose $\pi\in {\rm Skel}({\bf m})$. By Corollary \ref{corollaryquantumplane} one has $(q_{\pi(i)\pi(j)}-q_{ij})m_{\pi(i)i}m_{\pi(j)j}=0$ for $1\le i,j\le n$. By definition of ${\rm Skel}({\bf m})$, $m_{\pi(i)i}\ne 0$ for $1\le i\le n$, thus $q_{\pi(i)\pi(j)}-q_{ij}=0$ for $1\le i,j\le n$, i.e., $\pi\in\mathcal{P}_{\bf q}$.
    The proof is done.
\end{proof}

\subsection{\bf Block decomposition and invariant compatible permutations}

We use the block decomposition in \cite{KKZ2010}, which is a partition of $\{1,\cdots,n\}$ and is uniquely determined by ${\bf q}$.
\begin{definition}\cite[Definition 3.1]{KKZ2010}\label{defblock}
    Fix a ${\bf q}$ and $1\le i\le n$.\\
    $(1)$ We define the {\it block} containing $i$ to be
    \[
        B(i) = \{1\le i'\le n\ |\ q_{i'j} = q_{ij}, \ \forall \  1\le j \le n\}.
    \]
    We say that $i$ and $j$ are in the same block if $B(i) = B(j)$.\\
    $(2)$ We use the blocks to define an equivalence relation on $\{1,\cdots,n\}$, and then $\{1,\cdots,n\}$ is a disjoint union of blocks
    \[
        \{1,\cdots,n\} = \bigcup_{i\in W}B(i)
    \]
    for some index set $W$. The above equation is called the {\it block decomposition}.\\
    $(3)$ We denote by $m$ the number of blocks, i.e. $m :=| W |$. We will write $B(i)$ as $B_w$ for some $1\le w\le m$, thus $\{B_i\}_{1\le i\le m}$ is a partition of $\{1,\cdots,n\}$.
\end{definition}

\begin{remark}\label{remarkpar}
    Actually, $i$ and $j$ are in the same block if and only if the $i$-th row and the $j$-th row of ${\bf q}$ are the same.
\end{remark}



\begin{lemma}\label{lemmablock}
    For $1\le i,j\le n$, the following statements are equivalent.\\
   {\rm  (1)} $B(i)=B(j)$.\\
   {\rm  (2)} $B(\pi(i))=B(\pi(j))$ for all ${\pi}\in\mathcal{P}_{\bf q}$.\\
   {\rm  (3)} $B(\pi(i))=B(\pi(j))$ for some ${\pi}\in\mathcal{P}_{\bf q}$.
\end{lemma}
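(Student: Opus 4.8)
The plan is to reduce all three statements to a single symmetric claim about the rows of ${\bf q}$. By Remark \ref{remarkpar}, $B(i)=B(j)$ holds precisely when the $i$-th and $j$-th rows of ${\bf q}$ coincide, that is, $q_{ik}=q_{jk}$ for all $1\le k\le n$. So I would first translate everything into equality of rows, which converts the block-theoretic assertions into elementary equalities among the entries $q_{ab}$ and lets the compatibility relation $q_{\pi(a)\pi(b)}=q_{ab}$ act directly.

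The core step is to prove the following claim: for a fixed $\pi\in\mathcal{P}_{\bf q}$ and any $i,j$, one has $B(i)=B(j)$ if and only if $B(\pi(i))=B(\pi(j))$. For the forward direction, assume $q_{ik}=q_{jk}$ for all $k$. Given any column index $l$, since $\pi$ is a bijection there is a unique $k$ with $l=\pi(k)$; then compatibility gives $q_{\pi(i),l}=q_{\pi(i)\pi(k)}=q_{ik}$ and likewise $q_{\pi(j),l}=q_{jk}$, so $q_{\pi(i),l}=q_{\pi(j),l}$. As $l$ was arbitrary, rows $\pi(i)$ and $\pi(j)$ agree, i.e.\ $B(\pi(i))=B(\pi(j))$. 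The reverse direction is the same computation read in the other order: from $q_{\pi(i),l}=q_{\pi(j),l}$ for all $l$, specializing $l=\pi(k)$ and using $q_{\pi(i)\pi(k)}=q_{ik}$ and $q_{\pi(j)\pi(k)}=q_{jk}$ yields $q_{ik}=q_{jk}$ for all $k$.

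With the claim in hand, the three equivalences follow by a short cycle $(1)\Rightarrow(2)\Rightarrow(3)\Rightarrow(1)$. The implication $(1)\Rightarrow(2)$ is the forward direction of the claim applied to an arbitrary $\pi\in\mathcal{P}_{\bf q}$; the implication $(2)\Rightarrow(3)$ is immediate since $\mathcal{P}_{\bf q}$ is nonempty (it contains the identity, as $q_{ij}=q_{ij}$); and $(3)\Rightarrow(1)$ is the reverse direction of the claim applied to the particular $\pi$ furnished by $(3)$.

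I do not expect a genuine obstacle here: the whole argument rests on the two facts that same block means same row (Remark \ref{remarkpar}) and that a compatible $\pi$ permutes rows and columns while preserving entries. The only point to handle carefully is the bijectivity of $\pi$ on column indices, which guarantees that ranging $k$ over all indices is the same as ranging $l=\pi(k)$ over all indices; this is exactly what makes the row-equality condition transport cleanly in both directions, and is why no appeal to $\pi^{-1}$ is needed.
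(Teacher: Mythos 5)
Your proposal is correct and follows essentially the same route as the paper: both arguments transport equality of rows of ${\bf q}$ through a compatible $\pi$, and your substitution $l=\pi(k)$ in the forward direction is exactly the paper's computation $q_{\pi(i)s}=q_{\pi(i)\pi(\pi^{-1}(s))}=q_{i\pi^{-1}(s)}$ in disguise (choosing the unique $k$ with $l=\pi(k)$ is precisely an appeal to $\pi^{-1}$). The only cosmetic difference is that you package the two directions into a single claim before running the cycle $(1)\Rightarrow(2)\Rightarrow(3)\Rightarrow(1)$, whereas the paper proves the implications directly.
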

\begin{proof}
    $(1)\Rightarrow (2)$ Suppose that ${\pi}\in\mathcal{P}_{\bf q}$ and $B(i)=B(j)$. Then one has 
    \[
        q_{\pi(i)s} = q_{\pi(i) \pi(\pi^{-1}(s))}=q_{i\pi^{-1}(s)}=q_{j\pi^{-1}(s)}=q_{\pi(j)s}, \ \ \forall\ 1\le s\le n,
    \]
    thus by definition $B(\pi(i))=B(\pi(j))$.\\
    $(2)\Rightarrow (3)$ is trivial.\\
    $(3)\Rightarrow (1)$ Suppose that ${\pi}\in\mathcal{P}_{\bf q}$ such that $B(\pi(i))=B(\pi(j))$. Then
    \begin{align*}
        q_{is} = q_{\pi(i)\pi(s)}=q_{\pi(j)\pi(s)}=q_{js}, \ \ \forall\ 1\le s\le n,
    \end{align*}
    thus one gets $B(i)=B(j)$.
\end{proof}

\begin{corollary}\label{corpi}
    For each $\pi \in \mathcal{P}_{\bf q}$, define the action of $\pi$ on a subset $S\subset \{1,\cdots, n\}$ by $\pi(S) = \{\pi(s)\ | \ s\in S  \}$. Then for each $1\le i\le n$, $\pi(B(i))=B(\pi(i))$.
\end{corollary}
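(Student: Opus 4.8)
The plan is to reduce the set equality $\pi(B(i))=B(\pi(i))$ to a single biconditional about membership and then read off both inclusions from Lemma \ref{lemmablock}. Since $\pi$ is a bijection and $\pi(B(i))=\{\pi(s)\mid s\in B(i)\}$ by definition, the asserted equality is equivalent to the statement that for every $1\le s\le n$ one has $s\in B(i)$ if and only if $\pi(s)\in B(\pi(i))$. So it suffices to establish this equivalence of membership conditions.

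First I would rewrite each side in terms of the block-equality relation: by the definition of blocks (Definition \ref{defblock}), $s\in B(i)$ means exactly $B(s)=B(i)$, and likewise $\pi(s)\in B(\pi(i))$ means exactly $B(\pi(s))=B(\pi(i))$. Thus the goal becomes the equivalence $B(s)=B(i)\iff B(\pi(s))=B(\pi(i))$ for the fixed $\pi\in\mathcal{P}_{\bf q}$. This is precisely what Lemma \ref{lemmablock} supplies: the forward direction is the implication $(1)\Rightarrow(2)$ applied to the indices $s$ and $i$ (taking the conclusion for our particular $\pi$), while the backward direction follows from $(3)\Rightarrow(1)$, since $B(\pi(s))=B(\pi(i))$ holding for this one $\pi\in\mathcal{P}_{\bf q}$ is exactly hypothesis $(3)$ for the pair $s,i$.

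Chaining these together would give, for each $s$,
\[
    s\in B(i)\iff B(s)=B(i)\iff B(\pi(s))=B(\pi(i))\iff \pi(s)\in B(\pi(i)),
\]
which shows that $\pi$ restricts to a bijection of $B(i)$ onto $B(\pi(i))$ and hence $\pi(B(i))=B(\pi(i))$.

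There is essentially no deep obstacle here, as the statement is a direct corollary of Lemma \ref{lemmablock}; the only point requiring care is the reverse inclusion $B(\pi(i))\subseteq\pi(B(i))$. For this one cannot argue directly on elements of $B(\pi(i))$ but must pull back along $\pi^{-1}$: given $t\in B(\pi(i))$, set $s=\pi^{-1}(t)$ and note that $B(\pi(s))=B(t)=B(\pi(i))$, so invoking the $(3)\Rightarrow(1)$ direction of the lemma yields $B(s)=B(i)$, i.e.\ $t=\pi(s)\in\pi(B(i))$. Using the genuine biconditional above handles this automatically, which is why I would phrase the argument as the single membership equivalence rather than proving the two inclusions separately.
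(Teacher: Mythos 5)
Your proof is correct and follows essentially the same route as the paper: both arguments rest on Lemma \ref{lemmablock} applied in the directions $(1)\Rightarrow(2)$ and $(3)\Rightarrow(1)$, with the reverse inclusion handled by pulling back along $\pi^{-1}$. Your packaging of the two inclusions into the single membership biconditional $s\in B(i)\iff \pi(s)\in B(\pi(i))$ is only a cosmetic reorganization of the paper's proof, which establishes $\pi(B(i))\subset B(\pi(i))$ and $B(\pi(i))\subset\pi(B(i))$ separately by the same mechanism.
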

\begin{proof}
    Firstly, $\pi(B(i))$ is nonempty because $\pi(i)\in \pi(B(i))$. For each $s\in \pi(B(i))$, $\pi^{-1}(s)\in B(i)$, thus $B(\pi^{-1}(s))=B(i)$. By Lemma \ref{lemmablock}, one gets $B(s)=B(\pi(i))$, hence $\pi(B(i))\subset B(\pi(i))$. 

    On the other hand, for each $t\in B(\pi(i))$, by Lemma \ref{lemmablock} one has $B(\pi^{-1}(t))=B(i)$, thus $\pi^{-1}(t)\in B(i)$. By definition one gets $t\in \pi(B(i))$ and $B(\pi(i))\subset \pi(B(i))$, thus $\pi(B(i))=B(\pi(i))$. The proof is done.
\end{proof}


By this corollary, a compatible permutation $\pi\in\mathcal{P}_{\bf q}$ is also a permutation of the partition $\{B_i\}_{1\le i\le m}$. The following definition is natural.

\begin{definition}\label{defip}
    A permutation $\pi\in \mathcal{P}_{\bf q}$ is called {\it invariant}, if $\pi(B(i))=B(i)$ for $1\le i\le n$.
    Denote the set of invariant compatible permutations of $\mathcal{O}_{\bf q}(k^{n})$ by ${\mathcal I}_{\bf q}$, i.e.,
    $\mathcal{I}_{\bf q} = \{\pi\in \mathcal{P}_{\bf q}\ | \ \pi(B(i))=B(i),\ \forall \ 1\le i\le n\}$.
\end{definition}
\begin{lemma}\label{lemmaiq}
    $\mathcal{I}_{\bf q}\lhd \mathcal{P}_{\bf q}$. 
\end{lemma}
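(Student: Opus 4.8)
The goal is to prove that $\mathcal{I}_{\bf q}$ is a normal subgroup of $\mathcal{P}_{\bf q}$. The plan is to verify first that $\mathcal{I}_{\bf q}$ is genuinely a subgroup of $\mathcal{P}_{\bf q}$, and then to establish normality by a conjugation computation. Both parts should follow cleanly from Corollary \ref{corpi}, which tells us that every $\pi\in\mathcal{P}_{\bf q}$ acts on the set of blocks via $\pi(B(i))=B(\pi(i))$.

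For the subgroup part, I would argue as follows. The identity clearly lies in $\mathcal{I}_{\bf q}$. For closure under products, if $\pi,\sigma\in\mathcal{I}_{\bf q}$, then using Corollary \ref{corpi} twice one computes $(\pi\sigma)(B(i)) = \pi(\sigma(B(i))) = \pi(B(i)) = B(i)$ for every $1\le i\le n$, so $\pi\sigma\in\mathcal{I}_{\bf q}$. For closure under inverses, note that $\pi\in\mathcal{P}_{\bf q}$ forces $\pi^{-1}\in\mathcal{P}_{\bf q}$ (since $\mathcal{P}_{\bf q}$ is a group by Definition \ref{defcp}); then $\pi(B(i))=B(i)$ gives, after applying $\pi^{-1}$, that $B(i)=\pi^{-1}(B(i))$, so $\pi^{-1}\in\mathcal{I}_{\bf q}$.

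For normality, take any $\pi\in\mathcal{P}_{\bf q}$ and $\sigma\in\mathcal{I}_{\bf q}$; I must show $\pi\sigma\pi^{-1}\in\mathcal{I}_{\bf q}$, i.e. that $(\pi\sigma\pi^{-1})(B(i))=B(i)$ for all $i$. Applying Corollary \ref{corpi} repeatedly along the chain $\pi^{-1}$, then $\sigma$, then $\pi$, the natural computation is $(\pi\sigma\pi^{-1})(B(i)) = \pi\bigl(\sigma(\pi^{-1}(B(i)))\bigr) = \pi\bigl(\sigma(B(\pi^{-1}(i)))\bigr) = \pi\bigl(B(\pi^{-1}(i))\bigr) = B(i)$, where the third equality uses $\sigma\in\mathcal{I}_{\bf q}$ on the block $B(\pi^{-1}(i))$ and the last uses Corollary \ref{corpi} once more together with $\pi(\pi^{-1}(i))=i$. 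This shows $\pi\sigma\pi^{-1}$ fixes every block setwise, hence lies in $\mathcal{I}_{\bf q}$.

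I do not anticipate a serious obstacle here; the statement is essentially the abstract fact that the stabilizer of a partition under a permutation action, restricted to the subgroup permuting the blocks, is normal. The only point requiring a little care is to phrase the block action correctly so that Corollary \ref{corpi} applies at each step: the corollary is stated for blocks of the form $B(i)$, and one should make sure $\pi^{-1}(B(i))$ is itself written as a block, namely $B(\pi^{-1}(i))$, before invoking invariance of $\sigma$. Once that bookkeeping is in place, both the subgroup axioms and the conjugation identity reduce to direct substitutions.
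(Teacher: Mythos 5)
Your proof is correct, but it takes a genuinely different route from the paper's. The paper verifies normality by an entrywise computation with the matrix ${\bf q}$: for $\pi\in\mathcal{I}_{\bf q}$ one first records $q_{\pi(i)j}=q_{ij}$ for all $j$ (i.e., $\pi(i)$ and $i$ share the same row of ${\bf q}$, cf.\ Remark \ref{remarkpar}), and then compatibility of $\sigma\in\mathcal{P}_{\bf q}$ yields the chain $q_{\sigma^{-1}\pi\sigma(i)j}=q_{\pi(\sigma(i))\sigma(j)}=q_{\sigma(i)\sigma(j)}=q_{ij}$, which is only translated back into block language via Corollary \ref{corpi} at the last step; the subgroup axioms are dismissed as ``easy to verify.'' You instead never touch the entries of ${\bf q}$ and work entirely at the level of the action on blocks, with Corollary \ref{corpi} as the sole input: your chain $(\pi\sigma\pi^{-1})(B(i))=\pi\bigl(\sigma(B(\pi^{-1}(i)))\bigr)=\pi\bigl(B(\pi^{-1}(i))\bigr)=B(i)$ is exactly the standard argument that $\mathcal{I}_{\bf q}$, being the kernel of the induced homomorphism from $\mathcal{P}_{\bf q}$ to the symmetric group on $\{B_1,\cdots,B_m\}$, is automatically normal. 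Your version buys conceptual clarity --- it isolates Corollary \ref{corpi} as the only fact needed and shows normality is purely formal --- and it also spells out the subgroup axioms the paper omits; the paper's version stays closer to the defining data of ${\bf q}$ but gains nothing beyond that here. One cosmetic remark: in your closure step, the first equality $(\pi\sigma)(B(i))=\pi(\sigma(B(i)))$ is just functoriality of set images under composition, not an application of Corollary \ref{corpi}; only the invariance of $\sigma$ and $\pi$ is actually used there. There is no gap anywhere in your argument.
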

\begin{proof}
    Easy to verify that $\mathcal{I}_{\bf q}$ is a subgroup of $\mathcal{P}_{\bf q}$. By definition, for each $\pi \in \mathcal{I}_{\bf q}$ and $1\le i\le n$, one has $q_{\pi(i)j}=q_{ij}, \ \forall \ 1\le j\le n.$ 
    Then for any $\sigma\in \mathcal{P}_{\bf q}$,
    \begin{align*}
        q_{\sigma^{-1}\pi\sigma(i)j}=q_{\pi(\sigma(i))\sigma(j)}=q_{\sigma(i)\sigma(j)}=q_{ij},\ \forall \ 1\le j\le n,
    \end{align*}
    which means that $\sigma^{-1}\pi\sigma(B(i))=B(\sigma^{-1}\pi\sigma(i))=B(i)$, thus $\sigma^{-1}\pi\sigma\in \mathcal{I}_{\bf q}$. This proves that $\mathcal{I}_{\bf q}$ is a normal subgroup of $\mathcal{P}_{\bf q}$.
\end{proof}

\section{\bf Graded automorphism group of quantum affine spaces}



Let $\Phi$ be the group homomorphism defined as follows: 
\[
    \Phi:\prod\limits_{1\le i\le m}{\rm GL}_{|B_i|}(k)\rightarrow {\rm GL}_n(k),\ \  (M_1,\cdots, M_m)\mapsto {\bf m},
\] 
where ${\bf m}=(m_{ij})$ is defined by specifying its submatrices: ${\bf m}_{B_i,B_j}= \delta_{ij}M_i$ for $1\le i,j\le m$, where ${\bf m}_{B_i,B_j}$ is the submatrix of ${\bf m}$ with row index set $B_i$ and column index set $B_j$. 
$\Phi(M_1,\cdots, M_m)= {\bf m}$ is indeed invertible, as ${\bf m}$ is similar to a blocked diagonal matrix ${\rm diag}(M_1,\cdots, M_m)$, and each $M_i$ is invertible. Furthermore, $\Phi$ is injective, thus one has $\prod\limits_{1\le i\le m}{\rm GL}_{|B_i|}(k)\cong {\rm Im}\ \Phi$.
Additionally, one has the following lemma:

\begin{lemma}\label{lemmaimphi}
    Suppose ${\bf m}\in {\rm GL}_n(k)$. Then ${\bf m}\in {\rm Im}\ \Phi$ if and only if $m_{ij}=0$ for $B(i)\ne B(j)$, $1\le i,j\le n$.
\end{lemma}
\begin{proof}
    The necessity follows from the definition of $\Phi$. For sufficiency, notice that ${\bf m}$ is similar to ${\rm diag}({\bf m}_{B_1,B_1},\cdots, {\bf m}_{B_m,B_m})$, thus ${\rm Det}({\bf m})=\prod\limits_{1\le i\le m} {\rm Det}({\bf m}_{B_i,B_i})\ne 0$. In particular, ${\rm Det}({\bf m}_{B_i,B_i})\ne 0$ for all $1\le i\le m$. 
    Therefore, ${\bf m}_{B_i,B_i}\in {\rm GL}_{|B_i|}(k)$ for $1\le i\le m$. It follows directly that $\Phi({\bf m}_{B_1,B_1},\cdots, {\bf m}_{B_m,B_m})={\bf m}$. 
\end{proof}

To show ${\rm Im}\ \Phi$ is a normal subgroup of ${\rm Aut_{gr}}(\mathcal{O}_{\bf q}(k^{n}))$ (as stated in Lemma \ref{lemmaPhi2}), we require the following two lemmas.

\begin{lemma}\label{lemskel}
    Suppose that ${\bf m}=(m_{ij})\in M(\mathcal{O}_{\bf q}(k^{n}))$ and $\pi\in {\rm Skel}({\bf m})$.\\
  ${\rm (1)}$ If $m_{\pi(i)j}\ne 0$ for some $1\le i,j\le n$, then $B(i)=B(j)$.\\
  ${\rm (2)}$ ${\bf r}_{\pi}{\bf m}\in {\rm Im}\ \Phi$.
\end{lemma}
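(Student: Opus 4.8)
The plan is to treat (1) as the engine and to deduce (2) from it by translating the conclusion into the language of Lemma \ref{lemmaimphi}. Throughout, the first observation is that since ${\bf m}\in M(\mathcal{O}_{\bf q}(k^{n}))$ and $\pi\in{\rm Skel}({\bf m})$, Lemma \ref{lemcon}(2) gives $\pi\in\mathcal{P}_{\bf q}$; hence the compatibility relation $q_{\pi(a)\pi(b)}=q_{ab}$ is available for all $a,b$. This is the device that converts statements about permuted indices back into statements about ${\bf q}$ itself, and it is what makes the skeleton hypothesis interact usefully with the defining relations of $M(\mathcal{O}_{\bf q}(k^{n}))$.

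For (1), I would feed a single instance of the relation in Corollary \ref{corollaryquantumplane} into the skeleton hypothesis. Assuming $m_{\pi(i)j}\ne 0$, I apply Corollary \ref{corollaryquantumplane}(3) to the index quadruple $(\pi(i),\pi(\ell),j,\ell)$, obtaining $(q_{\pi(i)\pi(\ell)}-q_{j\ell})\,m_{\pi(i)j}\,m_{\pi(\ell)\ell}=0$ for every $\ell$. Since $\pi\in{\rm Skel}({\bf m})$ forces $m_{\pi(\ell)\ell}\ne 0$, and $m_{\pi(i)j}\ne 0$ by hypothesis, both matrix factors are nonzero, so $q_{\pi(i)\pi(\ell)}=q_{j\ell}$. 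Rewriting the left-hand side as $q_{i\ell}$ via compatibility yields $q_{i\ell}=q_{j\ell}$ for all $\ell$; that is, the $i$-th and $j$-th rows of ${\bf q}$ coincide, which is exactly $B(i)=B(j)$ by Remark \ref{remarkpar}. This settles (1).

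For (2), I would pass to matrix entries and invoke Lemma \ref{lemmaimphi}, which reduces membership in ${\rm Im}\ \Phi$ to the vanishing of every off-block entry. Computing the product, the $(i,j)$-entry of ${\bf r}_{\pi}{\bf m}$ is $m_{\pi^{-1}(i)j}$, so the task is to show that this entry vanishes whenever $B(i)\ne B(j)$, equivalently that a nonzero such entry forces $B(i)=B(j)$. The substance here is entirely (1): a nonzero entry $m_{\pi^{-1}(i)j}$ has the form $m_{\pi(a)j}$ for a suitable $a$, and (1) then pins down the block of its column index relative to $a$. The one delicate point, and the step I expect to demand the most care, is the index bookkeeping: the skeleton pattern is $m_{\pi(\ell)\ell}$, so to apply (1) cleanly one must align the permutation appearing in the row index of the product with this pattern, and then use Corollary \ref{corpi} and Lemma \ref{lemmablock} (that $\pi$ permutes the blocks, and that block equality is both preserved and reflected by $\pi$) to transport the resulting block identity onto the indices $i$ and $j$ demanded by Lemma \ref{lemmaimphi}. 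Once the indices are matched correctly, Lemma \ref{lemmaimphi} yields ${\bf r}_{\pi}{\bf m}\in{\rm Im}\ \Phi$ at once.
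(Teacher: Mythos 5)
Your proof of part (1) is correct and is essentially the paper's own argument: the same single instance of Corollary \ref{corollaryquantumplane}, applied to the quadruple $(\pi(i),\pi(s),j,s)$, with the skeleton hypothesis making $m_{\pi(s)s}\ne 0$ and compatibility of $\pi$ converting $q_{\pi(i)\pi(s)}$ into $q_{is}$, so that the $i$-th and $j$-th rows of ${\bf q}$ coincide. You are in fact slightly more careful than the paper, which uses $q_{\pi(i)\pi(s)}=q_{is}$ without explicitly citing Lemma \ref{lemcon}(2).

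Part (2) is where your attempt has a genuine gap, located exactly at the step you flagged as delicate and left unexecuted. Your entry computation is the correct one for the convention $({\bf r}_\pi)_{ij}=\delta_{i,\pi(j)}$ fixed after Lemma \ref{skeleton}: $({\bf r}_\pi{\bf m})_{ij}=m_{\pi^{-1}(i)j}$. But then writing $m_{\pi^{-1}(i)j}=m_{\pi(a)j}$ forces $a=\pi^{-2}(i)$, so part (1) yields only $B(\pi^{-2}(i))=B(j)$, and no application of Lemma \ref{lemmablock} or Corollary \ref{corpi} transports this to $B(i)=B(j)$: those results convert it into $B(\pi^{-1}(i))=B(\pi(j))$ or $B(i)=B(\pi^{2}(j))$, and $\pi^{2}$ need not fix blocks. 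In fact the assertion you are trying to prove is false as literally printed. Take ${\bf q}={\bf q}_2$ from the example following Definition \ref{defcp} and $\pi=(1,2,3)\in\mathcal{P}_{\bf q}$; then ${\bf m}:={\bf r}_\pi\in M(\mathcal{O}_{\bf q}(k^{3}))$ by Lemma \ref{lemcon}(1) and ${\rm Skel}({\bf m})=\{\pi\}$, while all blocks are singletons, so ${\rm Im}\ \Phi$ consists of the invertible diagonal matrices; yet ${\bf r}_\pi{\bf m}={\bf r}_{\pi^{2}}$ is not diagonal. So the transport step you deferred is not merely delicate: it cannot be carried out.

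What the paper's own proof of (2) actually establishes is the statement with ${\bf r}_\pi$ replaced by ${\bf r}_{\pi^{-1}}={\bf r}_\pi^{-1}$: its formula $({\bf r}_{\pi}{\bf m})_{ij}=m_{\pi(i)j}$ is, under the stated convention, the $(i,j)$-entry of ${\bf r}_{\pi^{-1}}{\bf m}$, not of ${\bf r}_\pi{\bf m}$. For that corrected assertion no index transport is needed at all: $({\bf r}_{\pi^{-1}}{\bf m})_{ij}=m_{\pi(i)j}\ne 0$ gives $B(i)=B(j)$ directly by (1), and Lemma \ref{lemmaimphi} concludes. The corrected form also suffices for every later use of the lemma: in Lemma \ref{lemmaPhi2} one replaces ${\bf r}_\sigma{\bf n}$ by ${\bf r}_{\sigma^{-1}}{\bf n}$ (and $\sigma^{-1}\in\mathcal{P}_{\bf q}$ keeps Lemma \ref{lemmaPhi0} applicable), and in the injectivity step of Theorem \ref{theoremmain} one writes ${\bf m}={\bf r}_\pi({\bf r}_{\pi^{-1}}{\bf m})$ with both factors in ${\rm Im}\ \Phi$. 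So your instinct that the bookkeeping was the crux was right, but the resolution is to change the permutation (or the permutation-matrix convention) in the statement, not to find a cleverer transport via Corollary \ref{corpi}.
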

\begin{proof}
    (1) If $i=j$, then $B(i)=B(j)$. Now suppose $i\ne j$ and let $1 \le s \le n$.  
    By Corollary \ref{corollaryquantumplane}, one has 
    \[
        (q_{\pi(i)\pi(s)}-q_{js})m_{\pi(i)j}m_{\pi(s)s}=0.
    \]
    By the definition of ${\rm Skel}({\bf m})$ and the assumption $m_{\pi(i)j}\ne 0$, one gets $m_{\pi(i)j}m_{\pi(s)s}\ne 0$. Thus $q_{js}=q_{\pi(i)\pi(s)}=q_{is}$ for all $1\le s\le n$, i.e., $B(i)=B(j)$.\\
    (2) Fix some $1\le i,j\le n$ such that $B(i)\ne B(j)$. Note that $({\bf r}_{\pi}{\bf m})_{ij}=m_{\pi(i)j}$, then by (1) one gets $({\bf r}_{\pi}{\bf m})_{ij}=m_{\pi(i)j}=0$. By Lemma \ref{lemmaimphi}, ${\bf r}_{\pi}{\bf m}\in {\rm Im}\ \Phi$.
\end{proof}

\begin{lemma}\label{lemmaPhi0}
    Suppose that ${\bf m}\in {\rm Im}\Phi$ and $\pi\in\mathcal{P}_{\bf q}$. Then ${\bf r}_{\pi}{\bf m}{\bf r}_{\pi^{-1}}\in {\rm Im} \Phi$.
\end{lemma}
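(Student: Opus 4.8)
The plan is to apply the entrywise criterion of Lemma \ref{lemmaimphi}, which says that a matrix in ${\rm GL}_n(k)$ lies in ${\rm Im}\ \Phi$ precisely when its $(i,j)$ entry vanishes whenever $B(i)\ne B(j)$. So first I would record that ${\bf r}_{\pi}{\bf m}{\bf r}_{\pi^{-1}}$ is invertible, being a product of the invertible matrices ${\bf r}_{\pi}$, ${\bf m}$, and ${\bf r}_{\pi^{-1}}={\bf r}_{\pi}^{-1}$, and then compute its entries explicitly.

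Using $({\bf r}_{\pi})_{ij}=\delta_{i,\pi(j)}$ and $({\bf r}_{\pi^{-1}})_{lj}=\delta_{l,\pi^{-1}(j)}$, a direct multiplication gives $({\bf r}_{\pi}{\bf m})_{il}=m_{\pi^{-1}(i),l}$ and hence $({\bf r}_{\pi}{\bf m}{\bf r}_{\pi^{-1}})_{ij}=m_{\pi^{-1}(i),\pi^{-1}(j)}$; that is, conjugation by ${\bf r}_{\pi}$ simultaneously relabels rows and columns by $\pi^{-1}$. It then remains to show that this entry is zero whenever $B(i)\ne B(j)$.

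Fix $i,j$ with $B(i)\ne B(j)$. Since $\pi\in\mathcal{P}_{\bf q}$, Lemma \ref{lemmablock}, applied to the indices $\pi^{-1}(i),\pi^{-1}(j)$ with the permutation $\pi$, yields the equivalence $B(\pi^{-1}(i))=B(\pi^{-1}(j))\Leftrightarrow B(i)=B(j)$; thus $B(i)\ne B(j)$ forces $B(\pi^{-1}(i))\ne B(\pi^{-1}(j))$. Applying the necessity direction of Lemma \ref{lemmaimphi} to ${\bf m}\in{\rm Im}\ \Phi$ then gives $m_{\pi^{-1}(i),\pi^{-1}(j)}=0$, i.e. $({\bf r}_{\pi}{\bf m}{\bf r}_{\pi^{-1}})_{ij}=0$. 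By the sufficiency direction of Lemma \ref{lemmaimphi}, we conclude ${\bf r}_{\pi}{\bf m}{\bf r}_{\pi^{-1}}\in{\rm Im}\ \Phi$.

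I expect no serious obstacle here; the computation is routine. The only point needing care is the bookkeeping with permutation indices in the conjugation formula, and invoking Lemma \ref{lemmablock} with the correct indices so that block membership transports correctly along $\pi$. Conceptually the lemma simply records that a compatible permutation preserves the block structure, so conjugating a block-diagonal matrix by its permutation matrix stays block-diagonal.
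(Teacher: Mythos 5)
Your proof is correct and takes essentially the same route as the paper: compute the entries of ${\bf r}_{\pi}{\bf m}{\bf r}_{\pi^{-1}}$, transport the block inequality along the compatible permutation via Lemma \ref{lemmablock}, and apply both directions of Lemma \ref{lemmaimphi}. (The paper records the entry as $m_{\pi(i)\pi(j)}$ where your careful bookkeeping with the stated convention $({\bf r}_{\pi})_{ij}=\delta_{i,\pi(j)}$ gives $m_{\pi^{-1}(i),\pi^{-1}(j)}$ --- a harmless discrepancy, since replacing $\pi$ by $\pi^{-1}\in\mathcal{P}_{\bf q}$ leaves the argument unchanged.)
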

\begin{proof}
    Note that for $1\le i,j\le n$ and $\pi\in \mathcal{P}_{\bf q}$, ${\bf r}_{\pi}{\bf m}{\bf r}_{\pi^{-1}}\in {\rm GL}_n(k)$ and $({\bf r}_{\pi}{\bf m}{\bf r}_{\pi^{-1}})_{ij}=m_{\pi(i)\pi(j)}$. If $B(\pi(i))\ne B(\pi(j))$ for some $i,j$, then by Lemma \ref{lemmablock}, $B(i)\ne B(j)$, thus by Lemma \ref{lemmaimphi} one gets $m_{\pi(i)\pi(j)}=0$. Again by Lemma \ref{lemmaimphi}, ${\bf r}_{\pi}{\bf m}{\bf r}_{\pi^{-1}}\in {\rm Im} \Phi$.
\end{proof}

\begin{lemma}\label{lemmaPhi2}
    ${\rm Im} \Phi\lhd  M(\mathcal{O}_{\bf q}(k^{n}))$.
\end{lemma}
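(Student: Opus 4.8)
The plan is to prove the statement in two stages: first that ${\rm Im}\,\Phi$ sits inside $M(\mathcal{O}_{\bf q}(k^{n}))$ as a subgroup, and then that it is stable under conjugation by arbitrary elements of $M(\mathcal{O}_{\bf q}(k^{n}))$. Since ${\rm Im}\,\Phi$ is already known to be a group (the image of the injective homomorphism $\Phi$), the first stage reduces to verifying the containment ${\rm Im}\,\Phi\subseteq M(\mathcal{O}_{\bf q}(k^{n}))$. For this I would take ${\bf m}\in{\rm Im}\,\Phi$ and check the criterion of Corollary \ref{corollaryquantumplane}: if $m_{is}m_{jt}\ne 0$, then by Lemma \ref{lemmaimphi} we have $B(i)=B(s)$ and $B(j)=B(t)$. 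By Remark \ref{remarkpar}, $B(i)=B(s)$ gives $q_{ij}=q_{sj}$; and $B(j)=B(t)$, combined with the defining relation $q_{ab}q_{ba}=1$ (so that equal rows of ${\bf q}$ force equal columns), gives $q_{sj}=q_{st}$. Hence $q_{ij}=q_{st}$ and $(q_{ij}-q_{st})m_{is}m_{jt}=0$, so ${\bf m}\in M(\mathcal{O}_{\bf q}(k^{n}))$ and ${\rm Im}\,\Phi$ is a subgroup.

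For normality, the key is a factorization of an arbitrary ${\bf g}\in M(\mathcal{O}_{\bf q}(k^{n}))$ into a permutation part and an ${\rm Im}\,\Phi$ part. By Lemma \ref{skeleton} choose $\pi\in{\rm Skel}({\bf g})$; then Lemma \ref{lemcon}(2) gives $\pi\in\mathcal{P}_{\bf q}$, and Lemma \ref{lemskel}(2) gives ${\bf h}:={\bf r}_\pi{\bf g}\in{\rm Im}\,\Phi$. Therefore
\[
    {\bf g}={\bf r}_{\pi}^{-1}{\bf h}={\bf r}_{\pi^{-1}}{\bf h}, \qquad \pi\in\mathcal{P}_{\bf q}, \ {\bf h}\in{\rm Im}\,\Phi.
\]
Since $\mathcal{P}_{\bf q}$ is a subgroup of $S_n$, we also have $\pi^{-1}\in\mathcal{P}_{\bf q}$, which is exactly what the last step requires.

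Now take any ${\bf m}\in{\rm Im}\,\Phi$ and compute the conjugate using this factorization:
\[
    {\bf g}{\bf m}{\bf g}^{-1}={\bf r}_{\pi^{-1}}\,{\bf h}{\bf m}{\bf h}^{-1}\,{\bf r}_{\pi^{-1}}^{-1}={\bf r}_{\pi^{-1}}\left({\bf h}{\bf m}{\bf h}^{-1}\right){\bf r}_{\pi}.
\]
Because ${\rm Im}\,\Phi$ is a group and ${\bf h},{\bf m}\in{\rm Im}\,\Phi$, the inner factor ${\bf m}':={\bf h}{\bf m}{\bf h}^{-1}$ again lies in ${\rm Im}\,\Phi$, so the conjugate equals ${\bf r}_{\pi^{-1}}{\bf m}'{\bf r}_{\pi}$. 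Applying Lemma \ref{lemmaPhi0} with the compatible permutation $\pi^{-1}$ (note $(\pi^{-1})^{-1}=\pi$) yields ${\bf r}_{\pi^{-1}}{\bf m}'{\bf r}_{\pi}\in{\rm Im}\,\Phi$, whence ${\bf g}{\bf m}{\bf g}^{-1}\in{\rm Im}\,\Phi$, establishing normality.

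The matrix bookkeeping is routine; the load-bearing idea, and the step I expect to matter most, is the factorization ${\bf g}={\bf r}_{\pi^{-1}}{\bf h}$. It is precisely the mechanism that peels off a compatible-permutation matrix so that conjugation by a general graded automorphism is reduced to the two cases already handled: conjugation within ${\rm Im}\,\Phi$ (closure under the group law) and conjugation of ${\rm Im}\,\Phi$ by a permutation matrix (Lemma \ref{lemmaPhi0}). The only subtlety worth flagging is in the containment step, where the relation $q_{ab}q_{ba}=1$ is needed to pass from equality of rows of ${\bf q}$ to equality of the relevant columns.
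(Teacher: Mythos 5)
Your proof is correct and takes essentially the same route as the paper's: the subgroup containment is verified via Corollary \ref{corollaryquantumplane} and Lemma \ref{lemmaimphi} with the identical chain $q_{ij}=q_{sj}=q_{js}^{-1}=q_{ts}^{-1}=q_{st}$, and normality is obtained by peeling off a skeleton permutation using Lemmas \ref{lemcon}(2) and \ref{lemskel}(2) and then conjugating back via Lemma \ref{lemmaPhi0}. Your explicit factorization ${\bf g}={\bf r}_{\pi^{-1}}{\bf h}$ is merely a tidier packaging of the paper's computation ${\bf n}{\bf m}{\bf n}^{-1}={\bf r}_{\sigma^{-1}}\left(({\bf r}_{\sigma}{\bf n}){\bf m}({\bf r}_{\sigma}{\bf n})^{-1}\right){\bf r}_{\sigma}$, so no substantive difference remains.
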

\begin{proof}
    Firstly we show ${\rm Im}\Phi$ is a subgroup of $ M(\mathcal{O}_{\bf q}(k^{n}))$. Suppose ${\bf m}\in {\rm Im} \Phi$. By Corollary \ref{corollaryquantumplane}, we need to compute $(q_{ij}-q_{st})m_{is}m_{jt}$ for $i,j,s,t \in \{1,\cdots, n\}$. Now fix some $i,j,s,t$. 
    By Lemma \ref{lemmaimphi}, $m_{is}\ne 0$ only when $B(i)=B(s)$, $m_{jt}\ne 0$ only when $B(j)=B(t)$. While in this case, then $q_{ij}= q_{sj}= q_{js}^{-1} = q_{ts}^{-1}=q_{st}$, implying $(q_{ij}-q_{st})m_{is}m_{jt}=0$, thus ${\bf m}\in M(\mathcal{O}_{\bf q}(k^{n}))$. 

    Next we show that ${\rm Im} \Phi$ is normal. Suppose ${\bf m}\in {\rm Im} \Phi$, ${\bf n}\in M(\mathcal{O}_{\bf q}(k^{n}))$ and $\sigma\in {\rm Skel}({\bf n})$. By Lemma \ref{lemskel}(2), ${\bf r}_{\sigma}{\bf n}\in {\rm Im}\Phi$. Since ${\rm Im} \Phi$ is a group and ${\bf r}_{\sigma}^{-1}={\bf r}_{\sigma^{-1}}$, $({\bf r}_{\sigma}{\bf n}){\bf m}({\bf n}^{-1}{\bf r}_{\sigma}^{-1})={\bf r}_{\sigma}{\bf n}{\bf m}{\bf n}^{-1}{\bf r}_{\sigma^{-1}} \in {\rm Im}\Phi$. By Lemma \ref{lemcon}, $\sigma\in\mathcal{P}_{\bf q}$, thus $\sigma^{-1}\in\mathcal{P}_{\bf q}$. Then by Lemma \ref{lemmaPhi0}, one gets ${\bf nmn^{-1}}={\bf r}_{\sigma^{-1}}{\bf r}_\sigma{\bf nmn^{-1}}{\bf r}_{\sigma^{-1}}{\bf r}_\sigma\in {\rm Im}\Phi$, which completes the proof.
\end{proof}

\begin{theorem}\label{theoremmain}
    \[
      {\rm  Aut}_{\rm gr}(\mathcal{O}_{\bf q}(k^{n}))\cong (\prod_{1\le i\le m}{\rm GL}_{|B_i|}(k)) \rtimes \mathcal{P}_{\bf q}/\mathcal{I}_{\bf q} 
    \]
\end{theorem}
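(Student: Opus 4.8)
The plan is to exhibit $M := M(\mathcal{O}_{\bf q}(k^{n}))$, which is isomorphic to ${\rm Aut}_{\rm gr}(\mathcal{O}_{\bf q}(k^{n}))$ via $\rho$, as an \emph{internal} semidirect product. Since Lemma \ref{lemmaPhi2} already gives ${\rm Im}\,\Phi \lhd M$ with ${\rm Im}\,\Phi \cong \prod_{1\le i\le m}{\rm GL}_{|B_i|}(k)$, it suffices to produce a subgroup $H \le M$ satisfying $H \cap {\rm Im}\,\Phi = \{I\}$, $H\cdot{\rm Im}\,\Phi = M$ and $H \cong \mathcal{P}_{\bf q}/\mathcal{I}_{\bf q}$. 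Then $M = {\rm Im}\,\Phi \rtimes H$, with $H$ acting on ${\rm Im}\,\Phi$ by conjugation, and the theorem follows by transport along $\rho$.

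The main tool is the injective group homomorphism $\psi:\mathcal{P}_{\bf q}\to M$, $\pi\mapsto {\bf r}_{\pi}$, which lands in $M$ by Lemma \ref{lemcon}(1) and respects products since ${\bf r}_{\pi}{\bf r}_{\sigma}={\bf r}_{\pi\sigma}$. I first record a structural fact about ${\bf q}$: by Remark \ref{remarkpar} equal rows force $q_{ij}$ to depend only on the pair of blocks $(B(i),B(j))$, so I may write $q_{ij}=Q_{wv}$ whenever $i\in B_w$, $j\in B_v$. Each $\pi\in\mathcal{P}_{\bf q}$ permutes the blocks by Corollary \ref{corpi}, inducing $\bar\pi\in S_m$ with $Q_{\bar\pi(w)\bar\pi(v)}=Q_{wv}$; the assignment $\pi\mapsto\bar\pi$ is a homomorphism $\mathcal{P}_{\bf q}\to S_m$ whose kernel is exactly $\mathcal{I}_{\bf q}$ (Definition \ref{defip}). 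Thus $\mathcal{P}_{\bf q}/\mathcal{I}_{\bf q}$ is identified with the block-permutation group $\bar{\mathcal P}:=\{\bar\pi:\pi\in\mathcal{P}_{\bf q}\}\le S_m$.

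The heart of the argument is to split the surjection $\mathcal{P}_{\bf q}\to\bar{\mathcal P}$. Write each block in increasing order as $B_w=\{b_{w,1}<\cdots<b_{w,d_w}\}$ with $d_w=|B_w|$, and to each $\tau\in\bar{\mathcal P}$ attach the \emph{order-preserving block shuffle} $\hat\tau\in S_n$ defined by $\hat\tau(b_{w,l})=b_{\tau(w),l}$ (legitimate since $d_w=d_{\tau(w)}$). Using block-constancy one computes $q_{\hat\tau(i)\hat\tau(j)}=Q_{\tau(w)\tau(v)}=Q_{wv}=q_{ij}$, so $\hat\tau\in\mathcal{P}_{\bf q}$; and a direct check gives $\widehat{\tau\upsilon}=\hat\tau\hat\upsilon$, so $\tau\mapsto\hat\tau$ is a homomorphism $\bar{\mathcal P}\to\mathcal{P}_{\bf q}$ splitting $\pi\mapsto\bar\pi$. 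Setting $H:=\{{\bf r}_{\hat\tau}:\tau\in\bar{\mathcal P}\}$ yields a subgroup of $M$ which, being the image of $\bar{\mathcal P}$ under the injective composite $\tau\mapsto\hat\tau\mapsto{\bf r}_{\hat\tau}$, is isomorphic to $\bar{\mathcal P}\cong\mathcal{P}_{\bf q}/\mathcal{I}_{\bf q}$.

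It then remains to check that $H$ is a complement. For the intersection: by Lemma \ref{lemmaimphi}, ${\bf r}_{\hat\tau}\in{\rm Im}\,\Phi$ iff $\hat\tau$ fixes every block setwise, iff $\tau={\rm id}$, iff ${\bf r}_{\hat\tau}=I$, so $H\cap{\rm Im}\,\Phi=\{I\}$. For the product: given ${\bf m}\in M$, choose $\pi\in{\rm Skel}({\bf m})$ by Lemma \ref{skeleton}; then $\pi\in\mathcal{P}_{\bf q}$ by Lemma \ref{lemcon}(2) and ${\bf r}_{\pi}{\bf m}\in{\rm Im}\,\Phi$ by Lemma \ref{lemskel}(2). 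Since $\pi$ and $\widehat{\bar\pi}$ have the same block permutation, $\widehat{\bar\pi}^{-1}\pi\in\mathcal{I}_{\bf q}$, so ${\bf r}_{\widehat{\bar\pi}^{-1}\pi}\in{\rm Im}\,\Phi$; using normality of ${\rm Im}\,\Phi$ one obtains ${\bf m}\in{\bf r}_{\pi^{-1}}{\rm Im}\,\Phi={\bf r}_{\widehat{\bar\pi}}^{-1}{\rm Im}\,\Phi\subseteq H\cdot{\rm Im}\,\Phi$. Hence $M={\rm Im}\,\Phi\rtimes H$, giving the stated isomorphism. I expect the only genuinely delicate point to be the construction in the third paragraph: an arbitrary transversal of $\mathcal{I}_{\bf q}$ in $\mathcal{P}_{\bf q}$ need not be a subgroup, so the existence of a complement relies essentially on the block-constancy of ${\bf q}$, which is precisely what forces the order-preserving shuffles to be compatible permutations and makes $\tau\mapsto\hat\tau$ multiplicative.
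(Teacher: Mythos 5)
Your proposal is correct, but it takes a genuinely different route from the paper's. The paper proves the theorem by defining a map $\Gamma: M(\mathcal{O}_{\bf q}(k^{n}))/{\rm Im}\,\Phi\rightarrow \mathcal{P}_{\bf q}/\mathcal{I}_{\bf q}$, ${\bf m}\,{\rm Im}\,\Phi\mapsto \pi\mathcal{I}_{\bf q}$ with $\pi\in{\rm Skel}({\bf m})$, and verifying well-definedness, multiplicativity and bijectivity through computations with skeleton permutations; this identifies the quotient $M(\mathcal{O}_{\bf q}(k^{n}))/{\rm Im}\,\Phi$ with $\mathcal{P}_{\bf q}/\mathcal{I}_{\bf q}$ but never exhibits a complement, so the splitting that the symbol $\rtimes$ in the statement presupposes is left implicit there. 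Your argument supplies exactly that missing ingredient: the order-preserving block shuffles $\hat\tau$ give a genuine group-theoretic section $\bar{\mathcal P}\rightarrow\mathcal{P}_{\bf q}$ of the projection $\pi\mapsto\bar\pi$ with kernel $\mathcal{I}_{\bf q}$, hence an internal decomposition $M(\mathcal{O}_{\bf q}(k^{n}))={\rm Im}\,\Phi\rtimes H$ with $H=\{{\bf r}_{\hat\tau}\}$ and an explicit conjugation action. All the checks go through: well-definedness of $\hat\tau$ uses $|B_w|=|B_{\tau(w)}|$, which follows from Corollary \ref{corpi} and bijectivity of any $\pi$ with $\bar\pi=\tau$; block-constancy of ${\bf q}$ in the \emph{second} index, which you attribute wholesale to Remark \ref{remarkpar}, needs the extra half-line $q_{ij}=q_{ji}^{-1}=q_{j'i}^{-1}=q_{ij'}$ for $B(j)=B(j')$, since the remark only gives equality of rows; and your surjectivity step ($\pi\in{\rm Skel}({\bf m})$, Lemma \ref{lemskel}(2), then normality to absorb ${\bf r}_{\widehat{\bar\pi}^{-1}\pi}\in{\rm Im}\,\Phi$) is essentially the same computation the paper uses for $\Gamma$. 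Comparing the two: the paper's quotient map avoids any choice of section and shows directly that the $\mathcal{I}_{\bf q}$-class of a skeleton permutation is a complete invariant of the coset ${\bf m}\,{\rm Im}\,\Phi$, at the cost of the somewhat delicate well-definedness and homomorphism verifications; your construction is more work up front but yields a strictly stronger conclusion (a split extension rather than just the isomorphism type of the quotient), and your closing observation — that an arbitrary transversal of $\mathcal{I}_{\bf q}$ in $\mathcal{P}_{\bf q}$ need not be a subgroup, so the existence of a complement genuinely depends on block-constancy of ${\bf q}$ — correctly identifies why the splitting is not automatic and is, in fact, the point the paper's own proof glosses over.
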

\begin{proof}
    By Lemma \ref{lemmaiq} and \ref{lemmaPhi2}, ${\rm Im} \Phi$ is a normal subgroup of $M(\mathcal{O}_{\bf q}(k^{n}))$, $\mathcal{I}_{\bf q}$ is a normal subgroup of $\mathcal{P}_{\bf q}$, thus $M(\mathcal{O}_{\bf q}(k^{n}))/{\rm Im} \Phi$ and $\mathcal{P}_{\bf q}/\mathcal{I}_{\bf q}$ are well-defined.
    Define a map 
    \[
        \Gamma: M(\mathcal{O}_{\bf q}(k^{n}))/{\rm Im} \Phi\rightarrow\mathcal{P}_{\bf q}/\mathcal{I}_{\bf q},$$ 
        $${\bf m}{\rm Im}\Phi\mapsto \pi\mathcal{I}_{\bf q},
    \]
    where $\pi\in {\rm Skel}({\bf m})$. We will show $\Gamma$ is a group isomorphism.
    
    Firstly, we claim that $\Gamma$ is well-defined. Suppose ${\bf m}{\rm Im}\Phi={\bf n}{\rm Im}\Phi$ for some ${\bf m,n}\in M(\mathcal{O}_{\bf q}(k^{n}))$. Then there exists ${\bf g}=(g_{ij})\in{\rm Im}\Phi$ such that ${\bf n}={\bf mg}$. Fix some $\pi\in {\rm Skel}({\bf m})$ and $\sigma\in {\rm Skel}({\bf mg})$. Note that ${\bf m}_{\pi(i)i}\ne 0$ and $({\bf mg})_{\sigma(i)i}\ne 0$ for $1\le i\le n$, therefore $({\bf mg})_{\sigma(i)i}=\sum\limits_{1\le l\le n}m_{\sigma(i)l}g_{li}\ne 0$ for all $1\le i\le n$. Since ${\bf g}\in {\rm Im}\ \Phi$, by Lemma \ref{lemmaimphi} one gets $g_{li}=0$ when $B(l)\ne B(i)$. Consequently,
    \[
        ({\bf mg})_{\sigma(i)i}  
        =\sum_{\substack{ 1\le l\le n,\\ B(l)=B(i)}}m_{\sigma(i)l}g_{li}\ne 0, \ 1\le i\le n.
    \]
    However, by Lemma \ref{lemskel}(1), $m_{\sigma(i)l}=m_{\pi(\pi^{-1}\sigma(i))l}\ne 0$ only if $B(\pi^{-1}\sigma(i))=B(l)$. Therefore, 
    \begin{equation*}
        ({\bf mg})_{\sigma(i)i}
        =\sum_{\substack{ 1\le l\le n,\\ B(l)=B(i)=B(\pi^{-1}\sigma(i))}}m_{\sigma(i)l}g_{li}\ne 0, \ 1\le i\le n.
    \end{equation*}
    Note that this equation is nonzero implies there is $1\le l\le n$ such that $B(l)=B(i)=B(\pi^{-1}\sigma(i))$, thus $B(i)= B(\pi^{-1}\sigma(i))$ for $1\le i\le n$. By Corollary \ref{corpi}, one gets $\pi^{-1}\sigma\in \mathcal{I}_{\bf q}$, thus 
    $\Gamma({\bf m}{\rm Im}\Phi)=\Gamma({\bf n}{\rm Im}\Phi)$. This shows the map $\Gamma$ is well-defined.

    \vskip5pt

    Next we show $\Gamma$ is a group isomorphism. Suppose that ${\bf m,n}\in M(\mathcal{O}_{\bf q}(k^n))$, $\pi\in {\rm Skel({\bf m})}$, $\sigma\in {\rm Skel({\bf n})}$ and $\tau\in {\rm Skel({\bf mn})}$. Then $\Gamma({\bf mn})=\tau\mathcal{I}_{\bf q}$, $\Gamma({\bf m})\Gamma({\bf n})=\pi\sigma\mathcal{I}_{\bf q}$. Note that $({\bf mn})_{\tau(i)i}\ne 0$ for $1\le i\le n$, one has
    \[
        ({\bf mn})_{\tau(i)i}=\sum_{\substack{ 1\le l\le n}}m_{\tau(i)l}n_{li}\ne 0, \ 1\le i\le n.
    \]
    However, by Lemma \ref{lemskel}(1), $m_{\tau(i)l}=m_{\pi(\pi^{-1}(\tau(i)))l}\ne 0$ only if $B(\pi^{-1}(\tau(i)))=B(l)$ and $n_{li}=n_{\sigma(\sigma^{-1}(l))i}\ne 0$ only if $B(\sigma^{-1}(l))=B(i)$. Thus
    \[
        ({\bf mn})_{\tau(i)i}=\sum_{\substack{ 1\le l\le n,\\ B(l)=B(\pi^{-1}\tau(i))=B(\sigma(i))}} m_{\tau(i)l}n_{li}\ne 0, \ 1\le i\le n.
    \]
    Note that this equation is nonzero implies there is $1\le l\le n$ such that $B(l)=B(\pi^{-1}\tau(i))=B(\sigma(i))$, thus $B(\pi^{-1}\tau(i))=B(\sigma(i))$ for $1\le i\le n$. And hence, as above, $\pi\sigma(B(i))= \tau(B(i))$, $1\le i\le n$ and $\Gamma({\bf m})\Gamma({\bf n})=\pi\sigma\mathcal{I}_{\bf q}=\tau\mathcal{I}_{\bf q}=\Gamma({\bf mn})$. Therefore, $\Gamma$ is a group homomorphism.

    For each $\pi \mathcal{I}_{\bf q}\in \mathcal{P}_{\bf q}/\mathcal{I}_{\bf q}$, $\Gamma({\bf r}_{\pi} {\rm Im}\Phi)=\pi \mathcal{I}_{\bf q}$, thus $\Gamma$ is surjective.    
    On the other hand, if $\Gamma({\bf m}{\rm Im}\Phi)=\pi \mathcal{I}_{\bf q}= \mathcal{I}_{\bf q}$, then $\pi\in \mathcal{I}_{\bf q}$. Note that $({\bf r}_{\pi})_{ij}=\delta_{i,\pi(j)}$ for $1\le i,j\le n$, thus $({\bf r}_{\pi})_{ij}=0$ when $B(i)\ne B(\pi(j))=B(j)$, and hence $({\bf r}_{\pi})_{B_i,B_j}=0$ for $1\le i\ne j\le m$. Therefore, ${\rm Det}({\bf r}_{\pi})=\prod\limits_{1\le i\le m}({\bf r}_{\pi})_{B_i,B_i}\ne 0$, which implies that $({\bf r}_{\pi})_{B_i,B_i}\in {\rm GL}_{|B_i|}(k)$ for $1\le i\le m$. By Lemma \ref{lemmaimphi}, we conclude that ${\bf r}_{\pi}\in{\rm Im}\Phi$. 

    Furthermore, by Lemma \ref{lemskel}(2), we know that ${\bf r}_{\pi}{\bf m}\in{\rm Im}\Phi$. Since ${\rm Im}\Phi$ is a group, ${\bf r}_{\pi}^{-1}$ is also in ${\rm Im}\Phi$. Thus ${\bf m} = {\bf r}_{\pi}^{-1} {\bf r}_{\pi}{\bf m}$ belongs to ${\rm Im}\Phi$, and one gets the injectivity of $\Gamma$. The proof is complete.
\end{proof}

In the commutative case, where ${\bf q}$ is an all-ones matrix, one has $m=1$ and $\mathcal{P}_{\bf q}=\mathcal{I}_{\bf q}=S_n$. This leads to the well-known result ${\rm Aut}_{\rm gr}(k[x_1,\cdots, x_n])\cong {\rm GL}_n(k)$.
On the other hand, when ${\bf q}$ has pairwise distinct rows, one has $m=n$, $B(i)=\{i\}$ for $1\le i\le n$ and $\mathcal{I}_{\bf q} = \{\Id\}$. Thus by Theorem \ref{theoremmain} one gets the following corollary:

\begin{corollary}
    If ${\bf q}$ has pairwise distinct rows, then ${\rm Aut}_{\rm gr}(\mathcal{O}_{\bf q}(k^{n}))\cong ({k^*})^n \rtimes \mathcal{P}_{\bf q}$.
\end{corollary}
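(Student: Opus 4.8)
The plan is to specialize Theorem \ref{theoremmain} to the hypothesis that ${\bf q}$ has pairwise distinct rows, so that the entire argument reduces to evaluating each factor of the semidirect product under this assumption. First I would pin down the block decomposition. By Remark \ref{remarkpar}, two indices $i$ and $j$ lie in the same block precisely when the $i$-th and $j$-th rows of ${\bf q}$ coincide; since the rows are assumed pairwise distinct, this forces $B(i) = \{i\}$ for every $1 \le i \le n$. Hence the number of blocks is $m = n$ and every block is a singleton, $|B_i| = 1$.

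Next I would evaluate the two ingredients appearing on the right-hand side of Theorem \ref{theoremmain}. For the product of general linear groups, $|B_i| = 1$ gives ${\rm GL}_{|B_i|}(k) = {\rm GL}_1(k) = k^*$, so that $\prod_{1 \le i \le m} {\rm GL}_{|B_i|}(k) = (k^*)^n$. For the permutation quotient I would show $\mathcal{I}_{\bf q} = \{\Id\}$: by Definition \ref{defip}, a permutation $\pi \in \mathcal{P}_{\bf q}$ lies in $\mathcal{I}_{\bf q}$ exactly when $\pi(B(i)) = B(i)$ for all $i$, and with $B(i) = \{i\}$ this condition reads $\pi(i) = i$ for every $i$, forcing $\pi = \Id$. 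Consequently $\mathcal{P}_{\bf q}/\mathcal{I}_{\bf q} \cong \mathcal{P}_{\bf q}$.

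Substituting these two computations into the isomorphism furnished by Theorem \ref{theoremmain} yields ${\rm Aut}_{\rm gr}(\mathcal{O}_{\bf q}(k^n)) \cong (k^*)^n \rtimes \mathcal{P}_{\bf q}$, which is the assertion. I do not expect a genuine obstacle in this argument, since the corollary is a direct specialization of the main theorem; the only points requiring a little care are reading off the singleton block structure from Remark \ref{remarkpar} and noting that singleton blocks collapse $\mathcal{I}_{\bf q}$ to the trivial group, after which the two factors $(k^*)^n$ and $\mathcal{P}_{\bf q}$ simply fall out of Theorem \ref{theoremmain}.
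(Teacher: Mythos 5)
Your proposal is correct and follows exactly the paper's own route: the paper likewise derives the corollary by observing that pairwise distinct rows force $B(i)=\{i\}$, hence $m=n$ and $\mathcal{I}_{\bf q}=\{\Id\}$, and then specializes Theorem \ref{theoremmain}. Your write-up merely spells out the details (via Remark \ref{remarkpar} and Definition \ref{defip}) that the paper leaves implicit.
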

\begin{examples}
    Suppose 
    ${\bf q} =
    \left(
        \begin{smallmatrix} 
            1 &1 &-1 &-1\\ 
            1 &1 &-1 &-1\\
            -1 &-1 &1 &1\\
            -1 &-1 &1 &1\\
        \end{smallmatrix}
    \right) $.
    Then one has $B_1 = \{1,2\},B_2 = \{3,4\}$ and ${\mathcal{I}_{\bf q}}=\{(1), (1,2), (3,4),(1,2)(3,4)\}$. Note that $\pi\in \mathcal{P}_{\bf q}$ is also a permutation on $\{B_1, B_2\}$, one gets 
    $$
    {\mathcal{P}_{\bf q}}=\{(1), (1,2), (3,4), (1,2)(3,4), (1,4)(2,3), (1,3)(2,4), (1,4,2,3), (1,3,2,4)\}.
    $$  
    By using {\rm GAP}\cite{GAP}, we find that $\mathcal{P}_{\bf q}\cong D_4$, $\mathcal{I}_{\bf q}\cong C_2^2$, where $C_2$ is the cyclic group of order $2$ and $D_4$ is the dihedral group of order $4$. Consequently, $\mathcal{P}_{\bf q}/\mathcal{I}_{\bf q}\cong C_2$. By Theorem $\ref{theoremmain}$, one gets ${\rm Aut_{gr}}(\mathcal{O}_{\bf q}(k^{4})) \cong ({\rm GL}_2(k)\times {\rm GL}_2(k)) \rtimes C_2.$
\end{examples}

\section*{Acknowledgments}
The author would like to thank the anonymous referees for their valuable comments and suggestions. The author would also like to thank Professor Pu Zhang for various helpful discussions. 

\bibliographystyle{amsalpha}

\begin{thebibliography}{99999999}
\bibitem[AC]{AC1996} J. Alev, M. Chamarie, D\'erivations et automorphismes de quelques alg\`ebres quantiques, Commun. Algebra 20(1992), no.6, 1787-1802.
\bibitem[BZ]{BZ2017} J. Bell, J. J. Zhang, An isomorphism lemma for graded rings, Proc. Amer. Math. Soc. 145(2017), no.3, 989-994.
\bibitem[BG]{BG2002}K. A. Brown, K. R. Goodearl, Lectures on algebraic quantum groups, Advanced Courses in Math. CRM Barcelona, Birkh\"auser Verlag, Basel, 2002.
\bibitem[CPWZ1]{CPWZ2015} S. Ceken, J. H. Palmieri, Y. -H. Wang, J. J. Zhang, The discriminant controls automorphism groups of noncommutative algebras, Adv. Math. 269(2015), 551-584.
\bibitem[CPWZ2]{CPWZ2016} S. Ceken, J. H. Palmieri, Y. -H. Wang, J. J. Zhang, The discriminant criterion and automorphism groups of quantized algebras, Adv. Math. 286(2016), 754-801.
\bibitem[CYZ]{CYZ2018}  K. Chan, A. A. Young, J. J. Zhang, Discriminants and automorphism groups of Veronese subrings of skew polynomial rings, Math. Z. 288(2018), no.3-4, 1395-1420.
\bibitem[G]{Ga2013}J. Gaddis, Isomorphisms of some quantum spaces, Ring theory and its applications, Contemp. Math. 609, Amer. Math. Soc., Providence, RI, 2014, 107-116.
\bibitem[GAP]{GAP} The GAP Group, GAP -- Groups, Algorithms, and Programming, Version 4.13.0; 2024. (https://www.gap-system.org)
\bibitem[GY1]{GY2017} K. R. Goodearl, M. T. Yakimov, Quantum cluster algebra structures and quantum nilpotent algebras, Proc. Natl. Acad. Sci. USA 111(2014), no.27, 9696–9703.
\bibitem[GY2]{GY2020} K. R. Goodearl, M. T. Yakimov, The Berenstein-Zelevinsky quantum cluster algebra conjecture, J. Eur. Math. Soc. (JEMS) 22(2020), no.8, 2453–2509.
\bibitem[GY3]{GY2021} K. R. Goodearl and M. T. Yakimov, Integral quantum cluster structures, Duke Math. J.170(2021), no.6, 1137–1200.
\bibitem[JZ]{JZ2023}H. Jin, P. Zhang, Bi-Frobenius quantum complete intersections with permutation antipodes, ArXiv.2309.01485v2.
\bibitem[KKZ1]{KKZ2010} E. Kirkman, J. Kuzmanovich, J. J. Zhang, Shephard-Todd-Chevalley theorem for skew polynomial rings, Algebr. Represent. Theory 13(2010), no.2, 127-158.
\bibitem[KKZ2]{KKZ2014} E. Kirkman, J. Kuzmanovich, J. J. Zhang, Invariants of (-1)-skew polynomial rings under permutation representations, Recent Advances in Representation Theory, Quantum Groups, Algebraic Geometry, and Related Topics, Contemp. Math. 623, Amer. Math. Soc., Providence, RI, 2014, 155-192.

\end{thebibliography}

\end{document}